\documentclass[11pt]{amsart}

\usepackage{amsmath}
\usepackage{amssymb}
\usepackage{graphicx}

\newtheorem{theorem}{Theorem}[section]
\newtheorem{corollary}[theorem]{Corollary}
\newtheorem{lemma}[theorem]{Lemma}
\newtheorem{proposition}[theorem]{Proposition}
\theoremstyle{definition}

\newtheorem{example}[theorem]{Example}

\numberwithin{equation}{section}

\usepackage{mathtools, bbm, hyperref}
\usepackage[shortlabels]{enumitem}
\newcommand{\N}{\mathbb{N}}
\newcommand{\R}{\mathbb{R}}
\newcommand{\C}{\mathbb{C}}
\newcommand{\dmath}{\mathop{}\!\mathrm{d}}
\newcommand{\calG}{\mathcal{G}}
\newcommand{\calH}{\mathcal{H}}
\newcommand{\calK}{\mathcal{K}}
\newcommand{\calL}{\mathcal{L}}
\newcommand{\calM}{\mathcal{M}}
\newcommand{\calO}{\mathcal{O}}

\newcommand{\sgn}{\operatorname{sgn}}
\newcommand{\dom}{\operatorname{dom}}

\usepackage[dvipsnames]{xcolor}

\setlist[enumerate]{topsep=3pt, itemsep=3pt, leftmargin=*}
\setlist[itemize]{topsep=3pt, itemsep=3pt}
\setlist[enumerate,1]{label={\upshape(\roman*)}}

\title[Weak coupling for Schrödinger operators in two dimensions]{Revisiting the Weak Coupling Phenomenon for Two-Dimensional Schrödinger Operators}

\author[J. Behrndt]{Jussi Behrndt}
\author[P. Siegl]{Petr Siegl}
\author[N. Weber]{Nicolas Weber}

\address[J. Behrndt, P. Siegl, N. Weber]{Institut für Angewandte Mathematik, Technische Universität Graz, Steyrergasse 30, 8010 Graz, Austria}
\email{\tt behrndt@tugraz.at, siegl@tugraz.at, nweber@math.tugraz.at}

\keywords{Bound state, weak coupling, asymptotic expansion, Birman-Schwinger principle, self-adjoint Schrödinger operator}

\subjclass[2020]{Primary 81Q10, 35J10; Secondary 47A55, 47A75}

\begin{document}

\dedicatory{Dedicated to Barry Simon on the occasion of his 80th birthday.}

\begin{abstract}
We study the existence of negative eigenvalues for two-dimensional Schrödinger operators with
real-valued potentials in the weak coupling regime.
In his pioneering paper \cite{Simon1976} from half a century ago, Simon was the first to describe the unique negative eigenvalue
emerging from the threshold of the essential spectrum of one- and two-dimensional Schrödinger operators.
The aim of this paper is to extend Simon's results in two dimensions to a broader class of potentials, allowing for both stronger singularities
and slower decay at infinity, at the cost of losing uniqueness of weakly coupled eigenvalues.
\end{abstract}

\maketitle


\section{Introduction and results}

It is well-known that Schrödinger operators {$-\Delta - V$} in $L^2(\R^d)$, $ d = 1, 2 $, 
have negative eigenvalues for arbitrarily small attractive potentials. More precisely, if $V:\R^d\to\R$ satisfies suitable 
integrability conditions and if the coupling $ \varepsilon > 0 $ is sufficiently small, then $ {-\Delta - \varepsilon V} $ 
has a negative eigenvalue if and only if
\begin{align} \label{simon_U}
    V \not\equiv 0 \quad\text{and}\quad \int_{\R^d} V(x)\dmath x \geq 0.
\end{align}
In this case, the negative eigenvalue is unique and simple. This behaviour is specific to dimensions one and two; in contrast, for $ d \geq 3 $ 
the Cwikel-Lieb-Rozenblum bound guarantees the absence of negative eigenvalues of $ {-\Delta - \varepsilon V} $ for real-valued potentials 
$ V \in L^{d/2}(\R^d) $ and sufficiently small couplings $ \varepsilon > 0 $.

This so-called weak coupling phenomenon was first described by Simon in his celebrated paper \cite{Simon1976}, where
he studied the weakly coupled eigenvalues, including 
existence, absence, uniqueness, asymptotic expansions, as well as their analyticity at zero,
for Schrödinger operators in one and two dimensions.
In $ d = 1 $ he considered potentials satisfying
\begin{align*} 
	(1 + |{\cdot}|^2)V \in L^1(\R)
\end{align*}
and showed that the weakly coupled eigenvalue, if it exists, satisfies
\begin{align*} 
	\sqrt{-\lambda_\varepsilon}
	=
	\frac{\varepsilon}{2} \int_{\R} V(x) \dmath x - \frac{\varepsilon^2}{4} \int_{\R^2} V(x)|x-y|V(y) \dmath(x,y) + o(\varepsilon^2),
	\quad{\varepsilon\to 0+}.
\end{align*}

One year later Klaus \cite{Klaus1977} extended Simon's results \cite{Simon1976} for $ d = 1 $ 
to the broader class of potentials satisfying $ {(1+|{\cdot}|)V \in L^1(\R)} $. The case
\begin{align} \label{long_range}
	V(x) \sim |x|^{-2+\delta},
	\quad |x|\to\infty, \quad\text{for some}\quad \delta\in [0,1)
\end{align}
in one dimension was subsequently studied in \cite{Blankenbecler1977}; for such potentials
a sharp spectral transition occurs at $ \delta = 0 $. If $ V $ satisfies \eqref{simon_U} and
\eqref{long_range} with $ \delta = 0 $ one still finds that $ {-\Delta - \varepsilon V} $ 
has exactly one negative eigenvalue as $ \varepsilon\to 0+ $.
In contrast, if $ V $ satisfies \eqref{long_range} with $ {\delta\in(0,1)} $, 
then $ {-\Delta - \varepsilon V} $ has 
infinitely many negative eigenvalues even if $ \int_{\R} V(x) \dmath x < 0 $.
In both cases, if $ \int_{\R} V(x) \dmath x > 0 $, the ground state admits the expansion
\begin{align*}
	\sqrt{-\lambda_\varepsilon} 
	= \frac{\varepsilon}{2} \int_{\R} V(x) \dmath x + o(\varepsilon), \quad {\varepsilon\to 0+}.
\end{align*}
For $ \delta=0 $, the latter can be further refined by isolating the second term, which is of 
order $ \varepsilon^2 \ln\varepsilon $, and then estimating the new remainder, see \cite{Blankenbecler1977}.

In this paper we focus on the results that Simon obtained 
in two dimensions. We recall them (including well-known spectral properties) in the following theorem.

\begin{theorem}[{\cite[Thm.~3.4]{Simon1976}}] \label{Thm:simon}
Let $ \varepsilon > 0 $ and assume that $ V \in L^1(\R^2) $ is real-valued and satisfies
\begin{equation}\label{simon_eta}
	V\in L^{1+\eta}(\R^2) 
	\quad\text{for some}\quad \eta > 0
\end{equation}
and 
\begin{equation} \label{simon_s}
	|{\cdot}|^t V \in L^1(|x|>1) 
	\quad\text{for some}\quad t > 0.
\end{equation}
Then the following assertions hold.
\begin{enumerate}
	\item The operator $ {H_\varepsilon = -\Delta - \varepsilon V} $ defined in \eqref{H_eps_form}
	is self-adjoint in $ L^2(\R^2) $, bounded from below and one has $ \sigma_{\rm{ess}}(H_\varepsilon) = [0,\infty) $.
	\item If $ V \not\equiv 0 $ and $ \int_{\R^2} V(x) \dmath x \geq 0 $, then $ H_\varepsilon $ has exactly one negative eigenvalue
	$ \lambda_\varepsilon $ for all sufficiently small $ \varepsilon > 0 $, which is simple. 
	If $ \int_{\R^2} V(x) \dmath x > 0 $, then this eigenvalue satisfies
		\begin{align*} 
		\ln(-\lambda_\varepsilon) = 
		-4\pi \left[ \int_{\R^2} V(x) \dmath x \right]^{-1} \varepsilon^{-1} + o(\varepsilon^{-1}), \quad {\varepsilon\to 0+};
	\end{align*}
	if $ \int_{\R^2} V(x) \dmath x = 0 $, then this eigenvalue satisfies
	\begin{align} \label{Simon:U=0}
		\ln(-\lambda_\varepsilon) = -\frac{C}{\varepsilon^2} + o(\varepsilon^{-2}),
		\quad {\varepsilon\to 0+},
	\end{align}
	where $ C > 0 $ is a constant.
	\item If $ \int_{\R^2} V(x) \dmath x < 0 $, then for all sufficiently small $ \varepsilon > 0 $ there are no negative eigenvalues of $ H_\varepsilon $.
\end{enumerate}
\end{theorem}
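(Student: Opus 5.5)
Throughout we use the Birman--Schwinger principle with the factorization $V = v\,u$, where $v = |V|^{1/2}$ and $u = \sgn(V)|V|^{1/2}$. Part (i) comes first and is standard. Condition \eqref{simon_eta} gives $V \in L^{1+\eta}(\R^2)$. In two dimensions this makes $V$ infinitesimally form bounded with respect to $-\Delta$: by Hölder and Sobolev, $\int |V||f|^2 \le \|V\|_{1+\eta}\|f\|_{2(1+\eta)/\eta}^2$, and the last norm is controlled by $\|\nabla f\|_2^{a}\|f\|_2^{2-a}$ with $a<2$. The KLMN theorem then produces $H_\varepsilon$ as a self-adjoint operator that is bounded below. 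To get $\sigma_{\rm ess}=[0,\infty)$, I would approximate $V$ by bounded, compactly supported potentials and show that $(-\Delta+1)^{-1/2}|V|(-\Delta+1)^{-1/2}$ is compact. Weyl's theorem for form perturbations then finishes (i).

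For (ii) and (iii) I use that $-\kappa^2<0$ is an eigenvalue of $H_\varepsilon$ if and only if $-1$ is an eigenvalue of $\varepsilon K_\kappa$. Here $K_\kappa = -u(-\Delta+\kappa^2)^{-1}v$, and its kernel is $-\frac{1}{2\pi}u(x)K_0(\kappa|x-y|)v(y)$. Multiplicities agree. The key step is the small-$\kappa$ expansion $K_0(r) = -\ln(r/2) - \gamma + O(r^{2t'}|\ln r|)$ for small $r$, for any $t'\le 1$. This splits the operator as
\[
\frac{1}{2\pi}K_0(\kappa|x-y|) = \frac{1}{2\pi}\ln(1/\kappa) + \frac{1}{2\pi}\bigl(\ln 2-\gamma-\ln|x-y|\bigr) + R_\kappa(x,y).
\]
This gives $\varepsilon K_\kappa = -\varepsilon \frac{\ln(1/\kappa)}{2\pi}\,|u\rangle\langle v| + \varepsilon M_\kappa$. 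The operator $M_\kappa$ has kernel $-u(x)\bigl[\frac{1}{2\pi}(\ln 2-\gamma-\ln|x-y|)+R_\kappa\bigr]v(y)$, and I need $M_\kappa$ to be Hilbert--Schmidt and uniformly bounded as $\kappa\to0$. This is where both hypotheses enter. Condition \eqref{simon_eta} controls the local singularity $\ln|x-y|$ through Hölder. Condition \eqref{simon_s} controls growth: it gives $\int |V(x)||\ln(1+|x|)|^2 \dmath x<\infty$, and it bounds $R_\kappa$ via $|K_0(\kappa r)-(\ldots)|\lesssim (\kappa r)^{t'}$. I expect this estimate to be the main technical obstacle, specifically getting uniform Hilbert--Schmidt bounds with only $t>0$ of moment. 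The fix is to choose $t'\le t$ small and interpolate between the logarithmic bound for $K_0$ and the small-argument expansion.

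Given this, for small $\varepsilon$ the operator $1+\varepsilon M_\kappa$ is invertible with norm uniformly close to $1$. The eigenvalue condition then reduces to the scalar equation
\[
1 = \varepsilon\frac{\ln(1/\kappa)}{2\pi}\Bigl\langle v,(1+\varepsilon M_\kappa)^{-1}u\Bigr\rangle .
\]
Since the perturbation is rank one, there is at most one eigenvalue, and it is simple. Expanding $(1+\varepsilon M_\kappa)^{-1}=1-\varepsilon M_\kappa+O(\varepsilon^2)$ gives $\langle v,u\rangle=\int V$. The right-hand side is therefore $\varepsilon\frac{\ln(1/\kappa)}{2\pi}\bigl(\int V - \varepsilon\langle v,M_\kappa u\rangle + O(\varepsilon^2)\bigr)$. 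We then argue case by case. If $\int V>0$, the scalar function is continuous and monotone in $\kappa$ near $0$, so a unique solution exists by the intermediate value theorem. It satisfies $\ln(1/\kappa)=2\pi/(\varepsilon\int V)+O(1)$, and since $\ln(-\lambda_\varepsilon)=2\ln\kappa$ this gives the stated asymptotics. If $\int V<0$, the right-hand side is negative for all small $\kappa$, so there is no solution; this proves (iii). Large $\kappa$ is excluded because $\varepsilon\|K_\kappa\|<1$ uniformly once $\kappa$ is bounded below. If $\int V=0$, the leading term vanishes. The second-order coefficient is $-\varepsilon^2\frac{\ln(1/\kappa)}{2\pi}\langle v,M_0u\rangle$, and $-\langle v,M_0 u\rangle = \frac{1}{4\pi^2}\int\!\!\int V(x)\ln\frac{1}{|x-y|}V(y)\,dx\,dy$ (the constant part drops since $\int V=0$). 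This is positive for $V\not\equiv0$ with $\int V=0$, because $-\ln|\cdot|$ is a positive definite kernel on mean-zero functions: its Fourier transform is $2\pi/|\xi|^2$. This yields $\ln(1/\kappa)\sim C/(2\varepsilon^2)$ and hence \eqref{Simon:U=0}.

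Uniqueness in (ii) needs one more point: no other eigenvalue of $\varepsilon K_\kappa$ can equal $-1$. This follows because $\varepsilon M_\kappa$ has small norm, so every eigenvalue of $-1$ has to come from the rank-one part. This is exactly what the reduction to the scalar equation captures.
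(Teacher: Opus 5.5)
The paper does not prove this theorem; it quotes it from Simon. Its proof of Theorem~\ref{Thm} uses the route you take: the splitting $Q(\alpha)=L(\alpha)+M(\alpha)$, the reduction to the scalar function $\Lambda_\varepsilon$ of Proposition~\ref{Prop:Lambda_schroed}, and an intermediate value argument. Under Simon's hypotheses your uniform bound on $M_\kappa$ as $\kappa\to0+$ does give existence, simplicity, the asymptotics, part (iii), and the sign of the $\varepsilon^2$-coefficient when $\int V=0$. That coefficient's prefactor is $\frac{1}{2\pi}$ rather than $\frac{1}{4\pi^2}$, which is harmless.

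What is not proved is the word \emph{exactly} in (ii). The rank-one structure only shows that, for each fixed $\kappa$, $-1$ is an eigenvalue of $\varepsilon K_\kappa$ of multiplicity at most one. So every solution of your scalar equation gives a simple eigenvalue. Your closing paragraph again argues at fixed $\kappa$ and says nothing about how many $\kappa\in(0,\kappa_0)$ solve the equation. The claimed monotonicity of $F(\kappa)=\varepsilon\frac{\ln(1/\kappa)}{2\pi}\langle v,(1+\varepsilon M_\kappa)^{-1}u\rangle$ is not justified. The uniform estimate $\langle v,(1+\varepsilon M_\kappa)^{-1}u\rangle=\int V+O(\varepsilon)$ only traps solutions in a window $\ln(1/\kappa)\in\frac{2\pi}{\varepsilon\int V}+[-C,C]$; it does not exclude several crossings there. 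For $\int V=0$ you give no uniqueness argument at all. Example~\ref{Ex}(i) of the paper shows that a rank-one reduction producing a simple eigenvalue is compatible with infinitely many negative eigenvalues. Under Simon's hypotheses, what excludes this is the uniform control of $M_\kappa$ all the way down to $\kappa=0$, and that control has to be fed into an actual counting argument.

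One way to do this uses what you already have. Since $H_\varepsilon\ge-\Delta-\varepsilon|V|$ in the form sense, min-max reduces the problem to showing that $-\Delta-\varepsilon|V|$ has at most one negative eigenvalue. For this nonnegative potential, the number of eigenvalues below $-\kappa^2$ equals the number of eigenvalues greater than $1$ of the self-adjoint operator $\varepsilon|V|^{1/2}(-\Delta+\kappa^2)^{-1}|V|^{1/2}=\varepsilon\frac{\ln(1/\kappa)}{2\pi}|v\rangle\langle v|+\varepsilon\widetilde M_\kappa$. Here $\widetilde M_\kappa$ satisfies the same uniform bound as $M_\kappa$, since your Hilbert--Schmidt estimates only involve $|V|$. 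By Weyl's inequality the second eigenvalue is at most $\varepsilon\sup_{0<\kappa\le\kappa_0}\|\widetilde M_\kappa\|<1$. Hence there is at most one eigenvalue below $-\kappa^2$ for every small $\kappa$, and letting $\kappa\to0+$ gives at most one negative eigenvalue. Together with your existence argument this yields exactly one.

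Alternatively, you can make monotonicity rigorous. From $\partial_\kappa[K_0(\kappa r)-\ln(1/\kappa)]=\kappa^{-1}(1-\kappa rK_1(\kappa r))$ and the moment condition you get $\|\partial_\kappa M_\kappa\|=O(\kappa^{t'-1})$. This gives $\kappa F'(\kappa)<0$ on a fixed interval $(0,\kappa_1]$ containing all solutions, in both cases $\int V>0$ and $\int V=0$.
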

\smallskip\vspace{-0.2cm}

After Simon's pioneering paper \cite{Simon1976} many works followed that studied limiting absorption principles and
the threshold behaviour of Schrödinger and other differential operators at the edge of the essential spectrum, see, e.g., 
\cite{Simon1977,Blankenbecler1977, Klaus1979, Lakaev1980, Patil1980, Klaus1982, Patil1982,
Holden1985, GesztesyHolden1987, BullaGesztesyRengerSimon1997, FassariKlaus1998}, 
the monograph \cite[Thm.~XIII.11, pp.~336-338]{ReedSimonMethodsIV} and for more recent developments 
\cite{FrankMorozovVugalter2011, KondejLotoreichik2014, ExnerKondejLotoreichik2018, CueninMerz2021, HoangHundertmarkRichterVugalter2023, MolchanovVainberg2023, ExnerKondejLotoreichik2024,Weidl-1999-24}.

However, to the best of our knowledge, there are no results that 
relax the integrability conditions \eqref{simon_eta} and \eqref{simon_s} on $ V $ in the two-dimensional case.
As a small contribution to Simon's work in \cite{Simon1976}
we prove the following theorem.

\begin{theorem} \label{Thm}
Let $ \varepsilon > 0 $ and assume that $ V \in L^1(\R^2) $ is real-valued and satisfies
\begin{equation}\label{V_roll}
 \int_{|x-y|< e} |V(x)|(\ln|x-y|)^2 |V(y)|\dmath(x,y) < \infty
\end{equation}
and 
\begin{equation}\label{V_ln_s}
 |{\ln|{\cdot}||}^s V \in L^1(|x|>1)  
	\quad\text{for some}\quad s\in[0,1).
\end{equation}
Then the following assertions hold.
\begin{enumerate}
	\item The operator $ {H_\varepsilon = -\Delta - \varepsilon V} $ defined in \eqref{H_eps_form}
	is self-adjoint in $ L^2(\R^2) $, bounded from below and one has $ \sigma_{\rm{ess}}(H_\varepsilon) = [0,\infty) $.
	\item If $ \int_{\R^2} V(x) \dmath x > 0 $, then $ H_\varepsilon $ has a negative eigenvalue
	$ \lambda_\varepsilon $ for all sufficiently small $ \varepsilon > 0 $. This eigenvalue is simple and satisfies
		\begin{align}\label{exp_ev} 
			\ln(-\lambda_\varepsilon) = 
			-4\pi \left[ \int_{\R^2} V(x) \dmath x \right]^{-1} \varepsilon^{-1} + o(\varepsilon^{-1+s}), \quad {\varepsilon\to 0+}.
		\end{align}
\end{enumerate}
\end{theorem}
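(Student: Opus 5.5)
We argue via the Birman--Schwinger principle, following Simon's strategy but with the logarithmic kernel controlled through \eqref{V_roll} and \eqref{V_ln_s} in place of \eqref{simon_eta} and \eqref{simon_s}.

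\textbf{Step 1: self-adjointness and essential spectrum.} Write $V = v u$ with $v=|V|^{1/2}$ and $u=\sgn(V)|V|^{1/2}$. For $\kappa>0$ the free resolvent $(-\Delta+\kappa^2)^{-1}$ has kernel $\frac{1}{2\pi}K_0(\kappa|x-y|)$. Since $K_0(r)\sim -\ln r$ as $r\to0$ and $K_0$ decays exponentially, condition \eqref{V_roll} together with $V\in L^1$ shows that
\[
K_\kappa = u(-\Delta+\kappa^2)^{-1}v
\]
is Hilbert--Schmidt. Its HS norm tends to $0$ as $\kappa\to\infty$, because the kernel is bounded by $C(1+|\ln(\kappa|x-y|)|)e^{-c\kappa|x-y|}$ and dominated convergence applies. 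Hence $V$ is form bounded with respect to $-\Delta$ with relative bound $0$. The KLMN theorem then gives (i) with the form definition \eqref{H_eps_form}. Compactness of $K_\kappa$ yields relative form compactness, so $\sigma_{\rm ess}(H_\varepsilon)=[0,\infty)$.

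\textbf{Step 2: splitting off the singular rank-one part.} By Birman--Schwinger, $-\kappa^2$ is an eigenvalue of $H_\varepsilon$ iff $-1$ is an eigenvalue of $-\varepsilon K_\kappa$, i.e.\ iff $1$ is an eigenvalue of $\varepsilon K_\kappa$ with the appropriate sign convention. Use the expansion
\[
K_0(r)= -\ln(r/2)-\gamma + R(r),
\]
and write the kernel of $K_\kappa$ as
\[
\frac{1}{2\pi}\bigl(-\ln\kappa + \ln 2-\gamma\bigr)u(x)v(y) \;+\; M_\kappa(x,y),
\]
where
\[
M_\kappa(x,y)=\frac{1}{2\pi}u(x)\bigl[K_0(\kappa|x-y|)+\ln(\kappa|x-y|/2)+\gamma\bigr]v(y)\;-\;\frac{1}{2\pi}u(x)\ln|x-y|\,v(y).
\]
The first term is rank one, $L_\kappa=\frac{-\ln\kappa+c_0}{2\pi}\langle \cdot, v\rangle u$.

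The remainder $M_\kappa$ must be controlled. The piece $u\ln|x-y|v$ is not HS globally, since $\ln|x-y|$ grows. We therefore split $M_\kappa = A + B_\kappa$. Here $A$ is the operator with kernel $-\frac{1}{2\pi}u(x)\ln|x-y|\,v(y)$, cut off to the region $|x-y|<e$ and combined with the bounded part. The term $B_\kappa$ collects the large-$|x-y|$ contributions. The key elementary estimate is that $|K_0(r)+\ln(r/2)+\gamma|\le C\min(r^2|\ln r|,\,|\ln r|+1)$, which is $\le C_s r^{\sigma}$ for small $\sigma$. We also use
\[
|\ln|x-y||\le \ln(1+|x|)+\ln(1+|y|)+C \quad\text{for } |x-y|\ge e.
\]
Interpolating, together with \eqref{V_ln_s}, the goal is to show that $\|M_\kappa\|_{HS}=o(|\ln\kappa|^{1-s})$ as $\kappa\to0$. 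This is the refined bound that produces the $o(\varepsilon^{-1+s})$ error.

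\textbf{Step 3: solving for the eigenvalue.} For $\varepsilon\|M_\kappa\|<1$ we factor
\[
1-\varepsilon K_\kappa=(1-\varepsilon M_\kappa)\bigl(1-\varepsilon(1-\varepsilon M_\kappa)^{-1}L_\kappa\bigr).
\]
The condition for an eigenvalue reduces to the scalar equation
\[
1=\varepsilon\frac{-\ln\kappa+c_0}{2\pi}\,\bigl\langle (1-\varepsilon M_\kappa)^{-1}u, v\bigr\rangle .
\]
Since $\langle u,v\rangle=\int V$, this is
\[
\frac{2\pi}{\varepsilon}=(-\ln\kappa)\Bigl(\int V + O(\varepsilon\|M_\kappa\|)\Bigr)+O(1).
\]
On the scale $-\ln\kappa\asymp\varepsilon^{-1}$ we have $\varepsilon\|M_\kappa\|=o(\varepsilon^{s})$. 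The map $\kappa\mapsto$ right-hand side is continuous, and an intermediate value argument in $\kappa$ gives a solution. Because $-\ln\kappa^2=-\ln(-\lambda_\varepsilon)$, we obtain
\[
-\ln(-\lambda_\varepsilon)=4\pi\Bigl(\int V\Bigr)^{-1}\varepsilon^{-1}\bigl(1+o(\varepsilon^{s})\bigr),
\]
which is exactly \eqref{exp_ev}.

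Simplicity follows because the Birman--Schwinger kernel at this $\kappa$ is a rank-one perturbation of an invertible operator. Strict monotonicity of the scalar function in $\kappa$ near the solution would give local uniqueness; global uniqueness is not claimed.

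\textbf{Main obstacle.} The hardest step is the remainder bound $\|M_\kappa\|=o(|\ln\kappa|^{1-s})$ under only the logarithmic moment condition \eqref{V_ln_s}. The large-distance part of the kernel behaves like $\ln(1/(\kappa|x-y|))$ truncated at $|x-y|\sim\kappa^{-1}$. My first attempt will bound it by $\min\{|\ln\kappa|,\ \ln(1+|x|)+\ln(1+|y|)\}\le|\ln\kappa|^{1-s}(\ln(1+|x|)+\ln(1+|y|))^{s}$. Combined with the tail integrability, splitting $V$ into $|x|<R$ and $|x|>R$ and then letting $R\to\infty$ should upgrade the $O$-bound to $o$.
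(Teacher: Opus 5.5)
\textbf{Gap: the key remainder bound is false for $s\in(0,1)$.} Your overall architecture is the paper's: Birman--Schwinger, Simon's splitting into a rank-one part with coefficient $\frac{1}{2\pi}(-\ln\kappa+c_0)$ plus a remainder $M_\kappa$, the factorization of $1-\varepsilon K_\kappa$, the scalar equation, and an intermediate value argument on the scale $|\ln\kappa|\asymp\varepsilon^{-1}$. Step~1 via KLMN is an acceptable substitute for the Kato-type resolvent construction.

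The gap is the estimate you call the main obstacle. The bound $\|M_\kappa\|=o(|\ln\kappa|^{1-s})$ fails for every $s\in(0,1)$, already in operator norm. Write $L=|\ln\kappa|$. For $|x-y|\geq 1$ and $\kappa<1/e$, the kernel of $M_\kappa$ equals $-\frac{1}{2\pi}u(x)\min\{\ln|x-y|,L\}v(y)$ up to $\calO(|V(x)|^{1/2}|V(y)|^{1/2})$.

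Take $V=\mathbbm{1}_{\{|x|<1\}}+V_\infty$ with $V_\infty$ as in \eqref{Ex_infty} and $\delta=s$; it satisfies all hypotheses of Theorem~\ref{Thm}. Pair $M_\kappa$ with $f=|V|^{1/2}\mathbbm{1}_{\{|x|<1\}}$ and $h=|V|^{1/2}\min\{\ln|x|,L\}\mathbbm{1}_{\{|x|>e^e\}}$. This gives $|(M_\kappa f,h)|=\tfrac12\|h\|^2+o(L^{1-s})$ with $\|h\|^2\gtrsim L^{2-s}(\ln L)^{-2}$. Hence $\|M_\kappa\|\gtrsim L^{1-s/2}/\ln L$, which is not $o(L^{1-s})$.

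What does hold is only $\|M_\kappa\|^2=o(L^{2-s})$ (Lemma~\ref{Lem:M}(i)). Your pointwise bound $\min\{L,A\}\le L^{1-s}A^{s}$, once squared for a Hilbert--Schmidt estimate, would need $\int|V(x)|\,|\ln|x||^{2s}\dmath x<\infty$, which is not assumed. With only the true norm bound, your first-order estimate $\langle(1-\varepsilon M_\kappa)^{-1}u,v\rangle=\int V+\calO(\varepsilon\|M_\kappa\|)$ gives a relative error $o(\varepsilon^{s/2})$. That yields $o(\varepsilon^{-1+s/2})$ in \eqref{exp_ev}, not $o(\varepsilon^{-1+s})$.

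\textbf{Missing idea: expand to second order and estimate the two corrections differently.} Use $(I-\varepsilon M_\kappa)^{-1}=I+\varepsilon M_\kappa+\varepsilon^2(I-\varepsilon M_\kappa)^{-1}M_\kappa^2$.

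The first-order correction is the scalar $\varepsilon\langle M_\kappa u,v\rangle=\varepsilon\int_{\R^4}V(x)\,k_\kappa(x,y)\,V(y)\dmath(x,y)$, where the kernel enters only to the first power. Here your inequality $\min\{L,A\}\le L^{1-s}A^{s}$ plus dominated convergence gives $o(L^{1-s})$ from \eqref{V_ln_s} alone (Lemma~\ref{Lem:M}(ii)).

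The second-order correction is bounded by $\varepsilon^2\|M_\kappa\|^2\|V\|_{L^1(\R^2)}/(1-\varepsilon\|M_\kappa\|)=o(\varepsilon^2L^{2-s})$. On the scale $\varepsilon L\asymp 1$ this is $o(\varepsilon^{s})$.

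Both corrections are therefore $o(\varepsilon^{s})$ relative to $\int V$. This gives $\Lambda_\varepsilon(\alpha(t))=-t+o(\varepsilon^{s})$ and the remainder $o(\varepsilon^{-1+s})$, as in the paper. For $s=0$ your first-order argument already suffices, since then $\|M_\kappa\|=o(L)$ is true.
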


We explain below that our assumptions on $V$ are weaker than those in Simon’s classical work as they allow for stronger local singularities of $V$ as well as a slower decay at infinity. A natural consequence of the latter, however, is the loss of uniqueness of the weakly coupled eigenvalue, see Example~\ref{Ex}(i). 

In more detail, Assumption \eqref{V_roll} is weaker than \eqref{simon_eta}; note that both conditions, roughly speaking, 
control the local singularities of $ V $. Indeed, if $ V \in L^{1+\eta}(\R^2) $ for some $ \eta > 0 $, 
then an application of Fubini's theorem and Hölder's inequality yields
\begin{equation} \label{simon_roll}
\begin{aligned} 
	\int_{|x-y|<e} &|V(x)|(\ln|x-y|)^2 |V(y)| \dmath(x,y) \\
	&=
	\int_{\R^2} |V(x)| \int_{|x-y|<e} (\ln|x-y|)^2 |V(y)| \dmath y \dmath x  \\
	&\leq
	\bigg( \int_{|u|<e} |{\ln|u|}|^{2+\frac{2}{\eta}} \dmath u \bigg)^{\frac{\eta}{1+\eta}} \| V \|_{L^1(\R^2)} \| V \|_{L^{1+\eta}(\R^2)}
	<
	\infty.
\end{aligned}
\end{equation}
Next, it is not difficult to check that Assumption \eqref{simon_s} for some $ t > 0 $ implies Assumption
\eqref{V_ln_s} for any $ s \geq 0 $ and that both conditions are essentially decay restrictions on $ V $. 
However, \eqref{V_ln_s} also covers the case $ s = 0 $, which includes potentials $ V \in L^1(\R^2) $ without any additional decay.
We also note that potentials $ V \in L^1(\R^2) $ satisfying \eqref{V_roll} are relatively form compact perturbations
of $-\Delta$ in $ L^2(\R^2) $, see Corollary~\ref{cor:V_formc}.

\begin{example} \label{Ex}
(i) Consider the potential
\begin{align} \label{Ex_infty}
	V_\infty(x) \coloneqq \frac{\mathbbm{1}_{\lbrace |x|>3 \rbrace}(x)}{|x|^2(\ln|x|)^{1+\delta}(\ln\ln|x|)^2},
	\quad \delta\in[0,1).
\end{align}
Clearly, $ V_\infty \in L^1(\R^2) \cap L^\infty(\R^2) $, but \eqref{simon_s} does not hold for any $ t > 0 $. 
However, it is simple to check that $ V_\infty $ satisfies \eqref{V_ln_s} for $ s=\delta $ and hence Theorem~\ref{Thm} 
implies that for all sufficiently small $ \varepsilon > 0 $ there exists 
a simple negative eigenvalue of $ H_\varepsilon $ that satisfies
\eqref{exp_ev}. At the same time a variational argument (see, e.g., \cite{Chadan2002}) 
shows that $ H_\varepsilon $ has infinitely many negative eigenvalues for any $ \varepsilon > 0 $.

(ii) Next, consider the potential
\begin{align} \label{Ex_0}
	V_0(x) \coloneqq \frac{\mathbbm{1}_{\lbrace |x|<\frac13 \rbrace}(x)}{|x|^2 (\ln|x|)^4}.
\end{align}
It is simple to check that $ V_0 $ belongs to $ L^1(\R^2) $ and satisfies both \eqref{simon_s} and 
\eqref{V_ln_s} for any $ t > 0 $ and $ s \geq 0 $, respectively.
Moreover, $ V_0 $ satisfies \eqref{V_roll} (see Section~\ref{sec:ex} for details) but
$ V_0 \not\in L^{1+\eta}(\R^2) $ for any $ \eta > 0 $, i.e., \eqref{simon_eta} does not hold.
However, Theorem~\ref{Thm} still implies that for every sufficiently small $ \varepsilon > 0 $ there exists a 
simple negative eigenvalue of $ H_\varepsilon $ that satisfies \eqref{exp_ev}.
\end{example}

As a final remark, let us also comment on the situation when the potential $ V $ satisfies
\eqref{V_ln_s} with $ s \geq 1 $. In this case the expansion
\eqref{exp_ev} can be further refined; as in \cite{Weber-2025,KondejLotoreichik2014}, 
the second term can be derived together with corresponding estimates on the remainder. In addition, one can then also
consider potentials of zero mean, i.e., $ \int_{\R^2} V(x) \dmath x = 0 $, for which the existence of a simple bound
state satisfying \eqref{Simon:U=0} can be shown by using arguments analogous to \cite{Simon1976}.

\section{Definition and properties of \texorpdfstring{$-\Delta-V$}{the Schrödinger operator}}  \label{sec:def}

In the following we define a self-adjoint realization $ H $ of $ {-\Delta - V} $ in $ L^2(\R^2) $ assuming that
$ V \in L^1(\R^2) $ is real-valued and satisfies \eqref{V_roll};
note that the condition \eqref{V_ln_s} is not used until Section~\ref{sec:WC}.
Under these assumptions it turns out that the 
Birman-Schwinger operator
\begin{align} \label{Q}
	Q(\alpha)
	\coloneqq
	\overline{
	|V|^\frac12 (-\Delta + \alpha^2)^{-1} V^\frac12},
	\quad V^\frac12 \coloneqq |V|^\frac12 \sgn(V),
	\quad \alpha > 0,
\end{align}
is a well-defined Hilbert-Schmidt operator in $ L^2(\R^2) $, see Proposition~\ref{Prop:HS} below. Following
Kato's construction in \cite{Kato-1966-162} we set $ R_0(\alpha) \coloneqq (-\Delta+\alpha^2)^{-1} $, $ \alpha > 0 $,
and 
\begin{equation}\label{H_res}
\begin{aligned} 
	R(\alpha)
	=
	R_0(\alpha) + \overline{R_0(\alpha) V^{\frac12}} [I - Q(\alpha)]^{-1} |V|^\frac12 R_0(\alpha),& \\
	\alpha\in\lbrace \beta > 0 : 1 \in \rho(Q(\beta)) \rbrace,&
\end{aligned}
\end{equation}
and conclude in Proposition~\ref{Prop:H} below that $ R(\alpha) $ is the resolvent of a semibounded 
self-adjoint operator $H$ (which is an extension of the symmetric operator $ {-\Delta - V} $ defined on
$ \dom(-\Delta - V)= H^2(\R^2) \cap \dom V $) in $ L^2(\R^2) $.

As a first step we analyze the Birman-Schwinger operator $ Q(\alpha) $ given by \eqref{Q}.
Recall that for $ \alpha > 0 $ the resolvent $ {(-\Delta + \alpha^2)^{-1}} $ 
of the free Laplacian in $ L^2(\R^2) $
is an integral operator with kernel
\begin{equation} \label{res_schroed}
\begin{aligned}
	\calG(x,y;\alpha)
		=
		\frac{1}{2\pi} K_0(\alpha|x-y|),
	\quad x,y \in \R^2, \quad x \neq y,
\end{aligned}
\end{equation}
where $ K_0 : \R^+ \to \R^+ $ denotes the modified Bessel function of the
second kind of order zero (see
\cite[Chap.~9.6]{AbramowitzStegun} for more details). Recall that $ K_0 $ is monotonically decreasing and has the expansions
\begin{align}   
    K_0(w) &= -\ln w + \ln 2 - \gamma + \calO( w^2 \ln w ),  \quad w \to 0+,  \label{K0_series} \\
    K_0(w) &= \left( \frac{\pi}{2w} \right)^{\frac12} e^{-w} \left( 1 + \calO(w^{-1}) \right),
    \quad w \to +\infty, \label{K0_expansion}
\end{align}
where $ \gamma $ is the Euler-Mascheroni constant, see \cite[Eq.~(9.6.13)~and~(9.7.2)]{AbramowitzStegun}, respectively.

In the next lemma we collect useful estimates for the Green's function \eqref{res_schroed}.

\begin{lemma}  \label{Lem:ineq}
Let $ \calG(x,y;\alpha) $ be given by \eqref{res_schroed}.
Then there exists a constant $C>0$ such that for all $ \alpha \in (0, \frac1e)$, all $ s \in [0,2] $ and all $x,y \in \R^2$ with $x \neq y$ the following inequalities hold.
\begin{enumerate}
\item $ \left| \calG(x,y;\alpha) + \frac{\ln\alpha}{2\pi} \right|^s \leq C(1 + |{\ln|x-y|}|^s) $.
\item $ \left| \calG(x,y;\alpha) + \frac{\ln\alpha}{2\pi} \right|^s \leq C |{\ln\alpha}|^s $ if $ \alpha|x-y| \geq 1 $. 
\item $ \calG(x,y;\alpha)^2 \leq C ((\ln\alpha)^2 + (\ln|x-y|)^2) $.
\end{enumerate}
\end{lemma}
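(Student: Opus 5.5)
Write $r=|x-y|$, $w=\alpha r$ and
\[
F(w,\alpha)\coloneqq \calG(x,y;\alpha)+\frac{\ln\alpha}{2\pi}=\frac{1}{2\pi}\bigl(K_0(w)+\ln\alpha\bigr).
\]
The plan is to prove each inequality first for $s=1$ or $s=2$, and then obtain the full range $s\in[0,2]$ by elementary manipulations. Throughout we use the asymptotics \eqref{K0_series}, \eqref{K0_expansion} and the monotonicity of $K_0$. From \eqref{K0_series} and continuity, there is $C_1>0$ with $|K_0(w)+\ln w|\le C_1$ for $w\in(0,1]$. From \eqref{K0_expansion} and monotonicity, $0<K_0(w)\le K_0(1)$ for $w\ge1$.

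For (ii), assume $w\ge 1$. Then $|F|\le \frac{1}{2\pi}(K_0(1)+|\ln\alpha|)$. Since $\alpha<1/e$, we have $|\ln\alpha|>1$, so $|F|\le C|\ln\alpha|$. Raising to the power $s\in[0,2]$ gives (ii), with constant $\max(C^0,C^2)$, which is uniform in $s$.

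For (i), split into two regimes.
\begin{itemize}
\item If $w\le1$, then $K_0(w)+\ln\alpha=(K_0(w)+\ln w)-\ln r$. Hence $|F|\le \frac{1}{2\pi}(C_1+|\ln r|)$.
\item If $w>1$, then $r>1/\alpha>e$, so $|\ln\alpha|<\ln r=|\ln r|$. Hence $|F|\le\frac{1}{2\pi}(K_0(1)+|\ln r|)$.
\end{itemize}
In both cases $|F|\le C(1+|\ln r|)$. To pass to general $s$, use $(a+b)^s\le 2^{s}(a^s+b^s)\le 4(a^s+b^s)$ for $a,b\ge0$ and $s\in[0,2]$, which follows from convexity or subadditivity. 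This gives $|F|^s\le C'(1+|\ln r|^s)$ with $C'$ independent of $s$, since $C^s\le\max(1,C^2)$.

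For (iii), write $\calG=F-\frac{\ln\alpha}{2\pi}$ and use $(a+b)^2\le 2a^2+2b^2$. Applying (i) with $s=2$ gives
\[
\calG^2\le 2C(1+(\ln r)^2)+\frac{(\ln\alpha)^2}{2\pi^2}.
\]
The constant term $1$ is absorbed into $(\ln\alpha)^2$, because $(\ln\alpha)^2>1$.

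The only step needing care is making the constants uniform in $s$ and in the regime split. In (i), the key point is that the large-$w$ regime forces $|\ln\alpha|\le|\ln r|$. Everything else is routine.
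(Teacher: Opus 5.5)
Your proof is correct and follows essentially the same route as the paper: you split at $w=\alpha|x-y|=1$, use the small-argument expansion of $K_0$ together with $\ln\alpha=\ln w-\ln|x-y|$ when $w<1$, and use monotonicity of $K_0$ together with $|\ln\alpha|\le\ln|x-y|$ when $w\ge 1$; you then pass to general $s$ by an elementary power inequality and derive (iii) from (i) with $s=2$. Your explicit tracking of the $s$-uniformity of the constants, and your absorption of the constant term in (iii) via $(\ln\alpha)^2>1$, make precise points the paper leaves implicit.
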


\begin{proof}
In the following $ C $ denotes a positive constant that may change between estimates. For ease of notation we set
\begin{align} \label{proof:def:w}
   w := \alpha|x-y|
\end{align}
for $ x, y \in \R^2 $ with $ x \neq y $; note that $ w > 0 $ and that the kernel we estimate reads
\begin{align}
    \calG(x,y;\alpha) + \frac{\ln\alpha}{2\pi} = \frac{K_0(w)+\ln\alpha}{2\pi}. \label{proof:weber_2D_1}
\end{align}
Note also that it suffices to prove both items (i) and (ii) for $ s = 1 $ since the general case then follows by taking
both sides to the power of $ s \in [0,2] $ and in the case of (i) applying the elementary inequality
$ |a+b|^s \leq  2(|a|^s + |b|^s) $.

\begin{enumerate}[\upshape (i), wide]
\item If $ w < 1 $, then \eqref{K0_series} gives us
\begin{align*}
    | K_0(w) + \ln w |
        \leq
    C \left( 1 + |w|^2 |{\ln w}| \right)
        \leq
    C
\end{align*}
and since $ \ln\alpha  = \ln w - \ln |x-y| $ this implies
\begin{align}\label{holladi}
    |K_0(w) + \ln\alpha|
        =
    |K_0(w) + \ln w - \ln|x-y| |
        \leq
    C(1 + |{\ln|x-y|}|).
\end{align}
If $ w \geq 1 $ we use that $ K_0 $ is monotonically decreasing and infer with $ \alpha\in(0,\frac1e) $
\begin{equation} \label{eq:K0+ln}
    |{K_0(w) + \ln\alpha}|
   		\leq
    |K_0(w)| + |{\ln\alpha}|
    		\leq
    |K_0(1)| + |{\ln\alpha}|
    		\leq
    	C|{\ln\alpha}|.
\end{equation}
Since $ e < \alpha^{-1} \leq |x-y| $ if $ w \geq 1 $ by \eqref{proof:def:w}, we conclude
\begin{equation*}
    |{K_0(w) + \ln\alpha}|
    \leq
    C|{\ln\alpha}|
    	=
    C\ln(\alpha^{-1})
    	\leq
    C \ln|x-y|,
\end{equation*}
which together with \eqref{holladi} shows (i).
%

\item By the definition \eqref{proof:def:w} of $ w $ we have that $ \alpha|x-y| \geq 1 $ 
is equivalent to $ w \geq 1 $ so 
the claim follows from \eqref{eq:K0+ln}. 

\item By applying the elementary inequality $ (a+b)^2 \leq 2a^2 + 2b^2 $ we find
\begin{align*}
	\calG(x,y;\alpha)^2 
	\leq
	2\bigg( \calG(x,y;\alpha) + \frac{\ln\alpha}{2\pi} \bigg)^2 + 2 \bigg( \frac{\ln\alpha}{2\pi} \bigg)^2 
\end{align*}
so the claim follows from (i) with $ s = 2 $.
\qedhere
\end{enumerate}
\end{proof}

As a consequence of Lemma~\ref{Lem:ineq} we obtain the compactness of the Birman-Schwinger operator $ Q $ as well
as the decay of $ \| Q(\alpha) \| $ as $ \alpha\to+\infty $.

\begin{proposition} \label{Prop:HS}
Assume that $ V \in L^1(\R^2) $ is real-valued and satisfies \eqref{V_roll}. 
Then for any $ \alpha > 0 $ the Birman-Schwinger operator
$ Q(\alpha) $ in \eqref{Q} 
is a Hilbert-Schmidt operator and one has $ \| Q(\alpha) \|_{\rm{HS}} \to 0 $ as $ \alpha\to+\infty $.
\end{proposition}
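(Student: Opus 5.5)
The kernel of $Q(\alpha)$ is
$$q_\alpha(x,y)=|V(x)|^{\frac12}\,\calG(x,y;\alpha)\,V^{\frac12}(y),$$
so the plan is to show $q_\alpha\in L^2(\R^2\times\R^2)$ and that $\|q_\alpha\|_{L^2}\to0$ as $\alpha\to+\infty$. This gives the Hilbert-Schmidt property, with $\|Q(\alpha)\|_{\rm HS}=\|q_\alpha\|_{L^2}$. It also shows that the operator $|V|^{\frac12}(-\Delta+\alpha^2)^{-1}V^{\frac12}$, defined on a dense set such as $\dom V^{\frac12}$, has an integral kernel in $L^2$, hence is bounded and its closure is the Hilbert-Schmidt operator with kernel $q_\alpha$. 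Concretely,
$$\|q_\alpha\|_{L^2}^2=\int_{\R^2}\int_{\R^2}|V(x)|\,\calG(x,y;\alpha)^2\,|V(y)|\dmath(x,y).$$

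For finiteness I first fix $\alpha\in(0,\frac1e)$ and split the domain into $|x-y|<e$ and $|x-y|\ge e$. On both regions Lemma~\ref{Lem:ineq}(iii) gives $\calG^2\le C((\ln\alpha)^2+(\ln|x-y|)^2)$.
\begin{itemize}
\item On $|x-y|<e$, the $(\ln\alpha)^2$ part contributes at most $C(\ln\alpha)^2\|V\|_{L^1}^2$, and the $(\ln|x-y|)^2$ part is finite exactly by \eqref{V_roll}.
\item On $|x-y|\ge e$, I avoid $(\ln|x-y|)^2$, which is unbounded there. Instead I use that $K_0$ is decreasing, so $\calG(x,y;\alpha)\le\frac1{2\pi}K_0(e\alpha)$, and this region contributes at most $C K_0(e\alpha)^2\|V\|_{L^1}^2<\infty$.
\end{itemize}
For general $\alpha>0$ no restriction is needed: on $|x-y|<e$ I bound $K_0(\alpha r)\le C(1+|\ln r|+|\ln\alpha|)$ using \eqref{K0_series} and monotonicity, and on $|x-y|\ge e$ I bound $K_0(\alpha r)\le K_0(e\alpha)$ as before. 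This avoids appealing to monotonicity in $\alpha$ of the whole operator.

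For the decay as $\alpha\to+\infty$, I use that the integrand $|V(x)|\calG(x,y;\alpha)^2|V(y)|$ tends to $0$ pointwise for $x\ne y$, since $K_0(\alpha|x-y|)\to0$ by \eqref{K0_expansion}. It is also decreasing in $\alpha$, because $K_0$ is decreasing. Hence, for $\alpha\ge1$, it is dominated by the $\alpha=1$ integrand, which is integrable by the previous step. Dominated convergence then gives $\|Q(\alpha)\|_{\rm HS}\to0$; the diagonal $x=y$ has measure zero.

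The main obstacle is the region $|x-y|\ge e$: there \eqref{V_roll} gives no control and $(\ln|x-y|)^2$ is unbounded, so Lemma~\ref{Lem:ineq}(iii) must not be used. The fix is the uniform bound $\calG\le K_0(e\alpha)/(2\pi)$ combined with $V\in L^1$. Everything else is routine.
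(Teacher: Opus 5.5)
Your proof is correct and is essentially the paper's argument: the logarithmic bound on $\calG^2$ near the diagonal is combined with \eqref{V_roll}, a uniform bound from the monotonicity of $K_0$ away from the diagonal is combined with $V\in L^1(\R^2)$, and the decay as $\alpha\to+\infty$ follows by dominated convergence with a monotone majorant. The only difference is in the bookkeeping of regions: you split once at the fixed scale $|x-y|=e$ and use $\calG\le K_0(e\alpha)/(2\pi)$ outside, whereas the paper first reduces to $\alpha\in(0,\frac1e)$, splits at $\alpha|x-y|=1$ (bounding by $K_0(1)$ outside), and then sub-splits the inner region at $|x-y|=e$.
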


\begin{proof}
We start by proving that $ Q(\alpha) $ is a Hilbert-Schmidt operator for any $ \alpha > 0 $, that is, we verify 
\begin{align} \label{claim_HS}
	\| Q(\alpha) \|_{\rm{HS}}^2 = \int_{\R^4} |V(x)| \calG(x,y;\alpha)^2 |V(y)| \dmath(x,y) < \infty.
\end{align}

Recall first that $ K_0 : \R^+ \to \R^+ $ is monotonically decreasing. Thus, 
for fixed $ x,y\in\R^2 $, $ x \neq y $, we have by \eqref{res_schroed} that $ \calG(x,y;\alpha)^2 $ is monotonically
decreasing in $ \alpha $ and hence it suffices to prove the claim for $ \alpha\in (0,\frac1e) $.

Fix $ \alpha\in (0,\frac1e) $; we show \eqref{claim_HS} by splitting the integral into the two 
regions $ \alpha|x-y| < 1 $ and $ \alpha|x-y| \geq 1 $.
For the region $ \alpha|x-y| < 1 $ we apply Lemma~\ref{Lem:ineq}(iii) and obtain for some $ C > 0 $ the inequality
\begin{align} \label{G_in}
	\calG(x,y;\alpha)^2 &\leq C((\ln\alpha)^2 + (\ln|x-y|)^2),
	\quad \alpha|x-y| < 1.
\end{align}
For the region $ \alpha|x-y| \geq 1 $ we use \eqref{res_schroed} and the monotonicity of $ K_0 $ to infer
\begin{align} \label{G_out}
	\calG(x,y;\alpha)^2 \leq \bigg(\frac{K_0(1)}{2\pi}\bigg)^2,
	\quad \alpha|x-y| \geq 1.
\end{align}
By combining \eqref{G_in} and \eqref{G_out} we obtain for some $ C > 0 $
\begin{align*}
	\| Q(\alpha) \|_{\rm{HS}}^2
	\leq
	C\bigg( \| V \|_{L^1(\R^2)}^2 + \int_{\alpha|x-y|<1} |V(x)| (\ln|x-y|)^2 |V(y)| \dmath(x,y) \bigg).
\end{align*}
The latter integral is finite. This is easily seen after further splitting the region $ \alpha|x-y|<1 $ into
$ |x-y|<e $ and $ e \leq |x-y| < 1/\alpha $ (note that the latter region is non-empty since by assumption $ \alpha\in(0,\frac1e) $).
For $ {|x-y| < e} $ the finiteness now follows from Assumption \eqref{V_roll} 
and for $ e \leq |x-y| < 1/\alpha $ it follows from the boundedness of $ (\ln|x-y|)^2 $. 
In summary, \eqref{claim_HS} holds.

For the second claim, we justify below that dominated convergence yields
\begin{align} \label{claim:Q_zero}
	\int_{\R^4} |V(x)|\calG(x,y;\alpha)^2|V(y)| \dmath(x,y) \to 0,
	\quad{\alpha\to +\infty}.
\end{align}
To this end, \eqref{K0_expansion} implies
$ K_0(\alpha|x-y|) \to 0 $ and by \eqref{res_schroed} also $ \calG(x,y;\alpha) \to 0 $ as $ \alpha\to +\infty $ for
every $ x \neq y $ and hence for
almost every $ (x,y) \in \R^4 $. Furthermore, by the monotonicity of $ K_0 $ on $ \R^+ $ we have for any 
$ \alpha_0 \in (0,\frac1e) $ and all $ \alpha\geq\alpha_0 $
\begin{align*}
	|V(x)|\calG(x,y;\alpha)^2|V(y)| \leq |V(x)|\calG(x,y;\alpha_0)^2|V(y)|,
	\quad x,y \in \R^2, \quad x \neq y,
\end{align*}
which by the first part of this proof is an integrable upper bound.
\end{proof}

It turns out that the Birman-Schwinger operator being Hilbert-Schmidt also ensures that $ V $ is a relatively form compact
perturbation of $ -\Delta $ in $ L^2(\R^2) $, that is, 
\begin{equation}\label{rfc}
\dom (-\Delta+1)^{\frac12}=H^1(\R^2)\subset\dom |V|^\frac12 
 \quad\text{and}\quad
|V|^\frac12 (-\Delta+1)^{-\frac12} 
\end{equation}
is a compact operator in $ L^2(\R^2) $.

\begin{corollary} \label{cor:V_formc}
Assume that $ V \in L^1(\R^2) $ is real-valued and satisfies \eqref{V_roll}. Then $ V $ is 
relatively form compact with respect to $ -\Delta $ in $ L^2(\R^2) $.
\end{corollary}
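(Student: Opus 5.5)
The idea is to derive relative form compactness from the Hilbert--Schmidt property in Proposition~\ref{Prop:HS}, using the factorization
\[
|V|^{\frac12}(-\Delta+\alpha^2)^{-1}|V|^{\frac12} = \bigl(|V|^{\frac12}(-\Delta+\alpha^2)^{-\frac12}\bigr)\bigl(|V|^{\frac12}(-\Delta+\alpha^2)^{-\frac12}\bigr)^*.
\]
Proposition~\ref{Prop:HS} is stated for $V^{\frac12}=|V|^{\frac12}\sgn(V)$. Its proof, however, only controls the integral $\int |V(x)|\calG(x,y;\alpha)^2|V(y)|\,\dmath(x,y)$. That integral is the Hilbert--Schmidt norm of the operator $A(\alpha)$ with kernel $|V(x)|^{\frac12}\calG(x,y;\alpha)|V(y)|^{\frac12}$. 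So I first note that $A(\alpha)$ is Hilbert--Schmidt for every $\alpha>0$, in particular for $\alpha=1$. Moreover $A(1)$ is nonnegative, since $\calG(\cdot,\cdot;1)$ is a positive definite kernel.

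Next I would work on a dense set. For $f\in C_0^\infty(\R^2)$ set $g=(-\Delta+1)^{\frac12}f$. I would also restrict to truncated potentials $V_n=V\mathbbm 1_{\{|V|\le n\}}$ first, so that all operators involved are bounded. For these one computes
\[
\bigl\||V_n|^{\frac12}(-\Delta+1)^{-\frac12}g\bigr\|^2=\bigl(|V_n|^{\frac12}(-\Delta+1)^{-1}|V_n|^{\frac12}h,h\bigr)\quad\text{for suitable }h.
\]
More cleanly, let $T_n=|V_n|^{\frac12}(-\Delta+1)^{-\frac12}$, which is bounded. Then $T_nT_n^*=A_n(1)$, the Birman--Schwinger-type operator for $|V_n|$. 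Hence $\|T_n\|^2=\|A_n(1)\|\le\|A_n(1)\|_{\rm HS}\le\|A(1)\|_{\rm HS}$, because the kernels increase pointwise in $n$. Therefore
\[
\int|V_n||u|^2\,\dmath x\le\|A(1)\|_{\rm HS}\,\|u\|_{H^1}^2\qquad\text{for } u\in H^1(\R^2).
\]
Monotone convergence as $n\to\infty$ then gives $H^1(\R^2)\subset\dom|V|^{\frac12}$. It also shows that $T=|V|^{\frac12}(-\Delta+1)^{-\frac12}$ is bounded, with $\|T\|^2\le\|A(1)\|_{\rm HS}$.

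For compactness I would argue that $TT^*$ coincides with $A(1)$. This can be checked on $L^2$ functions with compact support via Fubini, using Tonelli with $|V|$ and the positive kernel, together with the fact that $(-\Delta+1)^{-\frac12}(-\Delta+1)^{-\frac12}$ has kernel $\calG(\cdot,\cdot;1)$. Since $A(1)$ is Hilbert--Schmidt, it is compact. From $TT^*$ compact it follows that $T^*$ is compact: for a weakly null sequence $h_k$ one has $\|T^*h_k\|^2=(TT^*h_k,h_k)\to0$. Hence $T$ is compact, which is \eqref{rfc}.

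The main obstacle is the careful identification $TT^*=A(1)$ for unbounded $|V|^{\frac12}$, i.e.\ verifying domains. The truncation argument above is how I would handle it. Alternatively, one can avoid it by showing directly that $T_n\to T$ in operator norm: the Hilbert--Schmidt norm of $A(1)-A_n(1)$-type differences tends to $0$ by dominated convergence. Since each $T_n$ is compact (as $T_nT_n^*$ is Hilbert--Schmidt), the norm limit $T$ is compact.
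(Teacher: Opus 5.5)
Your proposal is correct and is essentially the paper's proof. You truncate $|V|$, bound $\|T_n\|^2=\|T_nT_n^*\|$ by the Hilbert--Schmidt norm of the Birman--Schwinger-type operator, pass to $|V|$ by monotone convergence, and deduce compactness from $TT^*$ being Hilbert--Schmidt. Your $|V|^{\frac12}(-\Delta+1)^{-1}|V|^{\frac12}$ is exactly the paper's $Q(1)\sgn(V)$. Your fallback of showing $\|T-T_n\|\to 0$ by applying the same bound to the tail potential $|V|\mathbbm{1}_{\{|V|>n\}}$ is a nice touch, since it avoids the identification $TT^*=A(1)$, which the paper asserts without further comment.
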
 

\begin{proof}
We show first that $ A\coloneqq |V|^\frac12 (-\Delta+1)^{-\frac12} $ is bounded and everywhere defined.
Consider for $ n \in \N $ the operators $ A_n $ in $ L^2(\R^2) $ defined by
\begin{align*}
	A_n \coloneqq V_n (-\Delta + 1)^{-\frac12},
	\quad 
	V_n(x) \coloneqq \min\lbrace |V(x)|^\frac12, n \rbrace.
\end{align*}
Clearly, $ V_n \in L^2(\R^2) \cap L^\infty(\R^2) $, and hence each $ A_n $, $n \in \N$, is bounded and everywhere defined. 
A short computation taking adjoints shows
\begin{align*}
	\| A_n^* f \|_{L^2(\R^2)}^2=\| (-\Delta + 1)^{-\frac12} V_n f\|_{L^2(\R^2)}^2
	\leq
	\| V_n (-\Delta + 1)^{-1} V_n \|_{\rm{HS}} \| f \|_{L^2(\R^2)}^2
\end{align*}
for any $ f \in L^2(\R^2) $. Using $ |V_n(x)|^2 \leq |V(x)| $ for all $ x \in \R^2 $ 
we obtain by the monotonicity of the Hilbert-Schmidt norms and Proposition~\ref{Prop:HS} that
\begin{align*}
\|A_n\|^2 = \|A_n^*\|^2 \leq \|Q(1)\|_{\rm{HS}} < \infty,
	\quad n \in \N.
\end{align*}
Since $ V_n(x)^2 \uparrow |V(x)|$ as $ n \to \infty $ monotone convergence implies 
\begin{align*}
\int_{\R^2} |V(x)| |(-\Delta+1)^{-\frac12} f(x)|^2 \dmath x
& = \lim_{n \to \infty}
\int_{\R^2} V_n(x)^2 |(-\Delta+1)^{-\frac12} f(x)|^2 \dmath x
\\
& \leq \limsup_{n \to \infty} \|A_n\|^2 \| f \|_{L^2(\R^2)}^2 
\\
& 
\leq \|Q(1) \|_{\rm{HS}} \| f \|_{L^2(\R^2)}^2,
\end{align*}
for any $ f \in L^2(\R^2) $. Thus we conclude the first inclusion in \eqref{rfc} and that 
$ A$ is bounded and everywhere defined in $ L^2(\R^2) $.
This also  implies
\begin{align*}
	A A^* = |V|^\frac12 (-\Delta+1)^{-\frac12} \overline{(-\Delta+1)^{-\frac12} |V|^\frac12} = Q(1)\sgn(V);
\end{align*}
since $ Q(1) $ is compact (see Proposition~\ref{Prop:HS}) and $ \sgn(V) $ is bounded, it follows that
$ A $ and $ A^* $ are compact as well; the second claim in \eqref{rfc} follows.
\end{proof}

We now employ the compactness of the Birman-Schwinger operator to show that $ R(\alpha) $ given by 
\eqref{H_res} defines the resolvent of a self-adjoint operator.

\begin{proposition} \label{Prop:H}
Assume that $ V \in L^1(\R^2) $ is real-valued and satisfies \eqref{V_roll}. 
Then the operator $ R(\alpha) $ given by \eqref{H_res} 
for $ \alpha\in\lbrace \beta > 0 : 1 \in \rho(Q(\beta)) \rbrace $ 
defines a self-adjoint operator $ H $ in $ L^2(\R^2) $ by
\begin{align} \label{H}
	R(\alpha) = (H + \alpha^2)^{-1},
\end{align}
which has the more explicit form
\begin{align} \label{H_form}
	H = -\Delta - V, 
	\quad \dom H = \bigl\{ f \in H^1(\R^2) : (-\Delta - V) f \in L^2(\R^2) \bigr\}. 
\end{align}
Moreover, $ H $ is bounded from below, $ \sigma_{\rm{ess}}(H) = [0,\infty) $, and one has
\begin{align} \label{BS}
	\dim\ker(H+\alpha^2) = \dim\ker(I-Q(\alpha)),
	\quad \alpha>0.
\end{align}
Finally, $ H $ is a self-adjoint extension of the symmetric operator $ {-\Delta - V} $ defined on $\dom(-\Delta - V) = {H^2(\R^2)\cap\dom V} $
in $ L^2(\R^2) $.
\end{proposition}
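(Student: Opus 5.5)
Rather than check Kato's resolvent formula \eqref{H_res} directly, I would build $H$ as a form sum and then identify its resolvent with $R(\alpha)$. By Corollary~\ref{cor:V_formc}, $V$ is relatively form compact with respect to $-\Delta$, so it is form bounded with relative bound $0$. The KLMN theorem then gives a unique self-adjoint operator $H$, bounded from below, associated with the closed semibounded form
\[
\mathfrak h[f,g]=(\nabla f,\nabla g)-(V^{\frac12}f,|V|^{\frac12}g),\qquad \dom\mathfrak h=H^1(\R^2).
\]
This $H$ is the natural candidate for \eqref{H_eps_form}. Unwinding the form relation distributionally gives the explicit description \eqref{H_form}. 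Since the form perturbation is relatively form compact, the difference of resolvents of $H$ and $-\Delta$ is compact. Weyl's theorem then gives $\sigma_{\rm ess}(H)=\sigma_{\rm ess}(-\Delta)=[0,\infty)$.

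Next I identify $R(\alpha)$ with $(H+\alpha^2)^{-1}$. Fix $\alpha$ with $1\in\rho(Q(\alpha))$ and write $A_\alpha=|V|^{\frac12}R_0(\alpha)^{\frac12}$, which is compact by Corollary~\ref{cor:V_formc}. Then $Q(\alpha)=A_\alpha A_\alpha^*\sgn V$, and the term $\overline{R_0(\alpha)V^{\frac12}}$ equals $R_0(\alpha)^{\frac12}A_\alpha^*\sgn V$. Let $f\in L^2$ and put $u=R(\alpha)f$. The second term of $u$ is $R_0(\alpha)^{\frac12}$ applied to an $L^2$ function, so $u\in H^1(\R^2)$. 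The main step is to verify
\[
\mathfrak h[u,g]+\alpha^2(u,g)=(f,g),\qquad g\in H^1(\R^2).
\]
For this I use $(\nabla R_0(\alpha)h,\nabla g)+\alpha^2(R_0(\alpha)h,g)=(h,g)$, extended to $h=V^{\frac12}\phi$ in the form sense via $A_\alpha$. Then $|V|^{\frac12}u=|V|^{\frac12}R_0f+Q(\alpha)(I-Q(\alpha))^{-1}|V|^{\frac12}R_0f=(I-Q(\alpha))^{-1}|V|^{\frac12}R_0f$, and the resulting terms cancel. By uniqueness in the representation theorem, $u=(H+\alpha^2)^{-1}f$, which is \eqref{H}. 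The same identity should also show that the set of admissible $\alpha$ is nonempty: it contains all large $\alpha$, since $\|Q(\alpha)\|_{\rm HS}\to0$ by Proposition~\ref{Prop:HS}.

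For the Birman--Schwinger relation \eqref{BS} I would exhibit mutually inverse linear maps between the two kernels. If $Hf=-\alpha^2 f$, set $\phi=|V|^{\frac12}f$; the form equation gives $f=\overline{R_0(\alpha)V^{\frac12}}\phi$, hence $Q(\alpha)\phi=\phi$. Conversely, if $Q(\alpha)\phi=\phi$, set $f=\overline{R_0(\alpha)V^{\frac12}}\phi\in H^1$. Then $|V|^{\frac12}f=Q(\alpha)\phi=\phi$, and the form identity shows $f\in\ker(H+\alpha^2)$. The two maps are injective and inverse to each other, so the dimensions coincide. This holds for every $\alpha>0$, whether or not $1\in\rho(Q(\alpha))$.

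Finally, for $f\in H^2\cap\dom V$ we have $\mathfrak h[f,g]=((-\Delta-V)f,g)$ for all $g\in H^1$, so $H$ extends the minimal operator $-\Delta-V$. I expect the main obstacle to be the rigorous justification of the form identity in the second step. Because $V^{\frac12}\phi$ need not lie in $L^2$ or $H^{-1}$ a priori, the operator products must be handled through closures and the factorization $A_\alpha A_\alpha^*$, for instance by approximating with truncated potentials $V_n$ as in the proof of Corollary~\ref{cor:V_formc}.
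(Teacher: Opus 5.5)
Your proof is correct, but it runs in the opposite direction from the paper's. The paper takes Kato's formula \eqref{H_res} as the definition of $H$ and draws everything from the abstract framework of Gesztesy--Latushkin--Mitrea--Zinchenko: that $H$ is a closed extension of $-\Delta-V$ on $H^2(\R^2)\cap\dom V$, the Birman--Schwinger principle \eqref{BS}, and the stability of the essential spectrum. It gets self-adjointness and semiboundedness from symmetry together with $\|Q(\alpha)\|\to 0$. Only at the end does it cite standard form methods, via Corollary~\ref{cor:V_formc}, to identify this $H$ with the form sum \eqref{H_form}.

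You instead start from the KLMN form sum and verify by hand that \eqref{H_res} is its resolvent, using only the factorization $Q(\alpha)=A_\alpha A_\alpha^*\sgn(V)$. You then prove \eqref{BS} for every $\alpha>0$ through the mutually inverse maps $f\mapsto|V|^{\frac12}f$ and $\phi\mapsto R_0(\alpha)^{\frac12}A_\alpha^*\sgn(V)\phi$.

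Your route is self-contained and actually proves the identification step that the paper delegates to the literature. It also makes the $\alpha$-independence of $H$ in \eqref{H} automatic. It gives compactness of $R(\alpha)-R_0(\alpha)=R_0(\alpha)^{\frac12}A_\alpha^*\sgn(V)[I-Q(\alpha)]^{-1}A_\alpha R_0(\alpha)^{\frac12}$ for free, hence the essential spectrum by Weyl. The paper's route is shorter and places the result inside a general, not necessarily self-adjoint, perturbation framework.

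Two small remarks. First, the step you flag as the main obstacle is in fact immediate. Since $A_\alpha$ is bounded and everywhere defined, $(A_\alpha^*\psi,(-\Delta+\alpha^2)^{\frac12}g)=(\psi,|V|^{\frac12}g)$ for every $g\in H^1(\R^2)$, so no truncation is needed. Second, $u=(H+\alpha^2)^{-1}f$ does not follow from ``uniqueness in the representation theorem'' alone. It follows because your form identity makes $H+\alpha^2$ surjective, which for self-adjoint $H$ forces $\ker(H+\alpha^2)=\operatorname{ran}(H+\alpha^2)^\perp=\{0\}$. Alternatively, use your Birman--Schwinger step with $1\in\rho(Q(\alpha))$.
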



\begin{proof}[Proof of Proposition~\ref{Prop:H}]
The statement essentially follows from well-known results by Kato~\cite{Kato-1966-162} and Konno and Kuroda \cite{KonnoKuroda}.
However, we follow the presentation in \cite{GesztesyLatushkinMitreaZinchenko2005}
and apply the results therein with $ {\calH = \calK = L^2(\R^2)} $,
$ H_0 = -\Delta $, $ \dom H_0 = H^2(\R^2) $, $ A = -\smash{|V|^\frac12} $ and $ B = \smash{V^\frac12} $,
where $ A $ and $ B $ are understood as maximal multiplication operators in $ L^2(\R^2) $.

More precisely, since by assumption $ V \in L^1(\R^2) $, we have $ \smash{|V|^{\frac12}} \in L^2(\R^2) $ and as a consequence, the
Sobolev embedding $ H^2(\R^2) \hookrightarrow L^\infty(\R^2) $ implies 
$ {\dom H_0 \subset \smash{\dom|V|^\frac12}} $ 
(actually, by \eqref{rfc} we even know $ {H^1(\R^2) \subset \smash{\dom|V|^\frac12}} $). Next, 
Proposition~\ref{Prop:HS} yields the compactness of the Birman-Schwinger operator $ Q(\alpha) $ 
and $ \lbrace \beta > 0 : 1 \in \rho(Q(\beta)) \rbrace \neq \emptyset $.
Hence, \cite[Thm.~2.3]{GesztesyLatushkinMitreaZinchenko2005} implies that $ H $ is 
a densely defined and closed extension of the operator $ {-\Delta - V} $ defined on $\dom(-\Delta - V) = {H^2(\R^2)\cap\dom V} $ in $ L^2(\R^2) $
and \cite[Thm.~3.2]{GesztesyLatushkinMitreaZinchenko2005} yields the Birman-Schwinger principle \eqref{BS}.
Since $ V $ is real-valued, $ H $ is symmetric. Moreover, $ \| Q(\alpha) \| \to 0 $ as $ \alpha\to +\infty $, so
$ -\alpha^2 \in \rho(H) $ for all sufficiently large $ \alpha $. Hence $ H $ is self-adjoint and bounded from below in $ L^2(\R^2) $.
The stability of the essential spectrum follows from 
\cite[Thm.~4.5]{GesztesyLatushkinMitreaZinchenko2005}.

Finally, $ H $ having the explicit form \eqref{H_form} follows from the relative form compactness of $V$ in
Corollary~\ref{cor:V_formc}.
Indeed, 
the operator $ H $ defined by \eqref{H} coincides with the self-adjoint realization \eqref{H_form}
of $ {-\Delta - V} $ in $ L^2(\R^2) $ constructed via standard form methods, see, e.g.,  
\cite[Chap.~VI.2~and~VI.3]{KatoPT}, \cite[Chap.~VIII.6]{ReedSimonI}, and \cite[Lemma~3.8]{BehrndtLangerName}.
\end{proof}

\section{Weak coupling regime and the proof of Theorem~\ref{Thm}} \label{sec:WC}
In the following we are interested in the negative eigenvalues of the operators $ H_\varepsilon $ formally given by
$ -\Delta - \varepsilon V $ in $ L^2(\R^2) $ and defined rigorously below using the construction from the previous section
(with $V$ replaced by $\varepsilon V$), and we prove our main result Theorem~\ref{Thm}.

Assume that $ \varepsilon > 0 $ is small and that
$ V \in L^1(\R^2) $ is real-valued and satisfies
\eqref{V_roll}. Following the construction in Section~\ref{sec:def},
set $ {R_0(\alpha) \coloneqq (-\Delta+\alpha^2)^{-1}} $ and define 
\begin{equation}\label{H_res_eps}
\begin{aligned} 
	R_\varepsilon(\alpha)
	=
	R_0(\alpha) + \varepsilon \overline{R_0(\alpha) V^{\frac12}} [I - \varepsilon Q(\alpha)]^{-1} |V|^\frac12 R_0(\alpha),& \\
	\alpha\in\lbrace \beta > 0 : 1 \in \rho(\varepsilon Q(\beta)) \rbrace,&
\end{aligned}
\end{equation}
where $ Q(\alpha) $ is the compact Birman-Schwinger operator in $ L^2(\R^2) $
given by \eqref{Q}. It follows as in Proposition~\ref{Prop:H} that 
the operator $ R_\varepsilon(\alpha) $ for $ \alpha\in\lbrace \beta > 0 : 1 \in \rho(\varepsilon Q(\beta)) \rbrace $
defines a self-adjoint operator $ H_\varepsilon $ in $ L^2(\R^2) $ by 
\begin{align}\label{hierister} 
	R_\varepsilon(\alpha) = (H_\varepsilon + \alpha^2)^{-1},
\end{align}
which has the more explicit form
\begin{align} \label{H_eps_form}
	H_\varepsilon = -\Delta - \varepsilon V, 
	\quad \dom H_\varepsilon = \bigl\{ f \in H^1(\R^2) : (-\Delta - \varepsilon V) f \in L^2(\R^2) \bigr\}.
\end{align}
Moreover, $ H_\varepsilon $ is bounded from below, $ \sigma_{\rm{ess}}(H_\varepsilon) = [0,\infty) $, one has
\begin{align} \label{BS_eps}
	\dim\ker(H_\varepsilon + \alpha^2) = \dim\ker(I-\varepsilon Q(\alpha)),
	\quad \alpha>0,
\end{align}
and $ H_\varepsilon $ is a self-adjoint extension of the symmetric operator
$ {-\Delta -\varepsilon V} $ defined on $\dom(-\Delta -\varepsilon V) = {H^2(\R^2)\cap\dom V} $ in $ L^2(\R^2) $.

\subsection{Preparatory estimates}
To analyze the eigenvalues of $ H_\varepsilon $ in the weak coupling limit
Simon \cite{Simon1976} decomposed the Birman-Schwinger operator $ Q(\alpha) $ as
\begin{align} \label{Q=L+M_schroed}
	Q(\alpha) = L(\alpha) + M(\alpha),\quad \alpha>0;
\end{align}
with integral operators $ L(\alpha) $ and $ M(\alpha) $ in $ L^2(\R^2) $ that have the kernels
\begin{align}
	\calL(x,y; \alpha) &:= |V(x)|^{\frac{1}{2}} g(\alpha) V(y)^{\frac{1}{2}}, \label{L_kernel} \\
	\calM(x,y; \alpha) &:= |V(x)|^{\frac{1}{2}} \left( \calG(x,y;\alpha) - g(\alpha) \right) V(y)^{\frac{1}{2}}, \label{M_kernel}
\end{align}
respectively, and the function $ g $ is given by
\begin{equation} \label{h}
\begin{aligned}
	g(\alpha) = -(2\pi)^{-1} \ln\alpha.
\end{aligned}
\end{equation}
It is useful to notice that $ L $ is of rank one and that its norm is given by
\begin{align*}
	\| L(\alpha) \| = |g(\alpha)|\| V \|_{L^1(\R^2)},
\end{align*}
and hence $ \| L(\alpha) \| $ has a logarithmic singularity at $ \alpha = 0 $.
Meanwhile, the singularity of $ \| M(\alpha) \| $ is weaker, which is the content of the next lemma.

\begin{lemma} \label{Lem:M}
Assume that $ V \in L^1(\R^2) $ is real-valued and satisfies \eqref{V_roll} 
and \eqref{V_ln_s} for some $ s \in [0,1) $. Then 
\begin{equation}\label{roll_exists}
 \int_{\R^4} |V(x)||{\ln|x-y|}|^s|V(y)|\dmath(x,y) < \infty
\end{equation}
and for the integral operator $ M(\alpha) $ in $ L^2(\R^2)$ for $\alpha\to 0+$ one has 
\begin{enumerate}
	\item $ \| M(\alpha) \|^2 = o(|g(\alpha)|^{2-s}) $,
	\item $ |(M(\alpha)|V|^\frac12, V^\frac12)| = o(|g(\alpha)|^{1-s}) $.
\end{enumerate}
\end{lemma}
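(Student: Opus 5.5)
For \eqref{roll_exists} I would split $\R^4$ into the three regions $|x-y|<e$, $e\le|x-y|$ with $|x|,|y|$ both bounded, and the rest. On $|x-y|<e$ one has $|\ln|x-y||^s\le 1+(\ln|x-y|)^2$, so the integral is controlled by $\|V\|_{L^1}^2$ and \eqref{V_roll}. On $|x-y|\ge e$ I use $\ln|x-y|\le \ln(|x|+|y|)\le \ln 2+\ln(1+|x|)+\ln(1+|y|)$ together with subadditivity of $t\mapsto t^s$ ($s\in[0,1)$). This gives
\begin{equation*}
|\ln|x-y||^s\le C\bigl(1+(\ln(1+|x|))^s+(\ln(1+|y|))^s\bigr).
\end{equation*}
The remaining integral is then bounded by $C\|V\|_{L^1}\bigl(\|V\|_{L^1}+2\int(\ln(1+|x|))^s|V(x)|\,\dmath x\bigr)$. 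This is finite by \eqref{V_ln_s}, since $\ln(1+|x|)\le \ln 2+\ln|x|$ for $|x|>1$ and the quantity is bounded for $|x|\le1$.

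For (i) I would bound the operator norm by the Hilbert--Schmidt norm:
\begin{equation*}
\|M(\alpha)\|^2\le\int_{\R^4}|V(x)|\,\Bigl|\calG(x,y;\alpha)+\tfrac{\ln\alpha}{2\pi}\Bigr|^2|V(y)|\,\dmath(x,y).
\end{equation*}
I then divide by $|g(\alpha)|^{2-s}\sim C|\ln\alpha|^{2-s}$ and show the quotient tends to $0$ by dominated convergence. The key pointwise estimate is
\begin{equation*}
\frac{|K|^2}{|\ln\alpha|^{2-s}}=|K|^s\cdot\Bigl(\frac{|K|}{|\ln\alpha|}\Bigr)^{2-s}, \qquad K:=\calG(x,y;\alpha)+\tfrac{\ln\alpha}{2\pi}.
\end{equation*}
\begin{itemize}
\item If $\alpha|x-y|\ge1$, Lemma~\ref{Lem:ineq}(ii) gives $|K|\le C|\ln\alpha|$, so the second factor is bounded. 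The first factor is at most $C(1+|\ln|x-y||^s)$ by Lemma~\ref{Lem:ineq}(i).
\item If $\alpha|x-y|<1$, then \eqref{holladi} gives $|K|\le C(1+|\ln|x-y||)$. When additionally $|x-y|\ge 1$, this yields $|K|\le C(1+|\ln\alpha|)$, and the same bound as before follows.
\item If $|x-y|<1$, I would instead use the cruder bound $|K|^2/|\ln\alpha|^{2-s}\le C(1+(\ln|x-y|)^2)$ from Lemma~\ref{Lem:ineq}(i) with exponent $2$, since $|\ln\alpha|\ge1$.
\end{itemize}
Altogether the integrand is dominated by $C|V(x)|\bigl(1+|\ln|x-y||^s+(\ln|x-y|)^2\mathbbm 1_{|x-y|<e}\bigr)|V(y)|$. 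This majorant is integrable by \eqref{roll_exists} and \eqref{V_roll}.

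Pointwise convergence to $0$ holds because, for fixed $x\neq y$, \eqref{K0_series} gives $K\to -\frac{1}{2\pi}(\ln|x-y|-\ln2+\gamma)$, which is bounded in $\alpha$, while the divisor $|\ln\alpha|^{2-s}\to\infty$ since $s<1<2$. Dominated convergence then proves (i).

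For (ii) the argument is the same with the first power. We have
\begin{equation*}
|(M(\alpha)|V|^{\frac12},V^{\frac12})|\le\int|V(x)||K||V(y)|,
\end{equation*}
and we need $|K|/|\ln\alpha|^{1-s}=|K|^s(|K|/|\ln\alpha|)^{1-s}\to0$. The domination is as above: use the bound $|K|^s$ times a bounded factor for $|x-y|\ge1$, and $|K|\le C(1+|\ln|x-y||)\le C(1+(\ln|x-y|)^2)$ for $|x-y|<1$. Pointwise convergence again follows because $K$ stays bounded while $1-s>0$.

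The main obstacle is the domination step. One must split the exponent so that only the $s$-th power of $|\ln|x-y||$ appears at large distances, where \eqref{V_ln_s} supplies nothing better, while still absorbing the small-distance singularity into \eqref{V_roll}. The bound \eqref{holladi} on the region $1\le|x-y|<1/\alpha$ is exactly what makes this split work.
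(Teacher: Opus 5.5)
Your proof is correct, and for \eqref{roll_exists} it is essentially the paper's argument; the third region you announce is never actually used. For (i) and (ii) you take a different route.

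The paper reduces both claims to
\[
\int_{\R^4}|V(x)||\calG(x,y;\alpha)-g(\alpha)|^t|V(y)|\,\dmath(x,y)=o(|\ln\alpha|^{t-s}), \qquad t\in\{1,2\}.
\]
It then splits $\R^4$ into four $\alpha$-dependent regions, according to the size of $\ln|x-y|$ relative to $1$, $|\ln\alpha|^{1/2}$ and $|\ln\alpha|$. The innermost region contributes $O(1)$, and the next one contributes an explicit $O(|\ln\alpha|^{(t-s)/2})$. Dominated convergence is used only to show that the integral of the fixed function $|V(x)||\ln|x-y||^s|V(y)|$ over the shrinking tail $\{\ln|x-y|\ge|\ln\alpha|^{1/2}\}$ tends to zero.

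You instead build a single $\alpha$-uniform integrable majorant for $|K|^t/|\ln\alpha|^{t-s}$ on all of $\R^4$. It rests on the factorization $|K|^s(|K|/|\ln\alpha|)^{t-s}$, which the paper also uses on its outermost region. You obtain the smallness from the pointwise limit $K(x,y;\alpha)\to\frac{1}{2\pi}(\ln 2-\gamma-\ln|x-y|)$ given by \eqref{K0_series}, so one application of dominated convergence finishes both items.

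Your version is shorter and needs no auxiliary scale. The paper's splitting is more quantitative: it isolates all non-explicit behaviour in a tail integral of $V$, so it would give an explicit rate under any quantitative decay assumption on that tail.
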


\begin{proof}
We start with some preliminary observations and verify \eqref{roll_exists}. Note  
that the 
inequality
\begin{align} \label{ineq:ln_r}
	|{\ln|x-y|}|^r \leq 1 + (\ln|x-y|)^2,
	\quad x,y \in \R^2, \quad x \neq y,\quad r\in[0,2],
\end{align}
holds. To check \eqref{roll_exists}, observe that the existence of the integral over the region $ |x-y| < e $ follows from \eqref{ineq:ln_r} applied
with $ r = s \in [0,1) $ and our assumptions $ V \in L^1(\R^2) $ 
and \eqref{V_roll}.
For the region $ |x-y| \geq e $ we make use of the elementary inequality
\begin{align} \label{ineq:ln_tmp}
	\ln(1+|x|) \leq \ln 2 + \mathbbm{1}_{\lbrace |x| \geq 1\rbrace}(x) \ln|x|.
\end{align}
Since $ e \leq |x-y| \leq |x|+|y| \leq (1+|x|)(1+|y|) $ we have by \eqref{ineq:ln_tmp}
\begin{align*}
	(\ln|x-y|)^s
	&\leq 
	(\ln(1+|x|)+\ln(1+|y|))^s   \\
	&\leq
	\big( 2\ln 2 + \mathbbm{1}_{\lbrace |x| \geq 1\rbrace}(x) \ln|x|
		 + \mathbbm{1}_{\lbrace |y| \geq 1\rbrace}(y) \ln|y| \big)^s \\
	&\leq
	C \big( 1 + \mathbbm{1}_{\lbrace |x| \geq 1\rbrace}(x) |{\ln|x|}|^s
		 + \mathbbm{1}_{\lbrace |y| \geq 1\rbrace}(y) |{\ln|y|}|^s \big);
\end{align*}
this together with $ V \in L^1(\R^2) $ and \eqref{V_ln_s} shows the finiteness of the integral
over the region $ |x-y| \geq e $. Summing up, we have shown \eqref{roll_exists}.

For the proofs of assertions (i) and (ii) 
we again use the convention that $ C $ denotes a positive constant that may change between estimates.
By the definition \eqref{M_kernel} of the kernel of $ M $ we have
\begin{align*}
	\| M(\alpha) \|^2
	&\leq 
	\int_{\R^4} |V(x)| |\calG(x,y;\alpha) - g(\alpha)|^2 |V(y)| \dmath(x,y),
	\\
	|(M(\alpha)|V|^\frac12, V^\frac12)|
	&\leq
	\int_{\R^4} |V(x)| |\calG(x,y;\alpha) - g(\alpha)| |V(y)| \dmath(x,y).
\end{align*}
Hence, employing \eqref{h}, both
assertions follow if we show for $ t \in \lbrace 1,2 \rbrace $ that
\begin{align} \label{claim_t}
	\int_{\R^4} |V(x)| |\calG(x,y;\alpha) - g(\alpha)|^t |V(y)| \dmath(x,y)
	=
	o(|{\ln\alpha}|^{t-s})
\end{align}
as $ \alpha\to 0+ $.
In the following we assume $ \alpha\in(0,\frac1e) $ so that
the inequalities in Lemma~\ref{Lem:ineq} are valid. In particular, by Lemma~\ref{Lem:ineq}(i), there exists $C>0$ such that for all 
$\alpha \in (0, \frac 1e)$, we have
\begin{align} \label{ineq:W}
	|\calG(x,y;\alpha) - g(\alpha)|^t \leq C\left( 1 + |{\ln|x-y|}|^t \right),
	\quad (x,y) \in \R^4, \quad x \neq y.
\end{align}
Next, introduce the sets
\begin{equation} \label{Omega}
\begin{aligned}
	\Omega_1 = \Omega_1(\alpha) &= \big\lbrace (x,y) \in \R^4 : \ln|x-y| < 1 \big\rbrace, \\
	\Omega_2(\alpha) &= \big\lbrace (x,y) \in \R^4 : 1 \leq \ln|x-y| < |{\ln\alpha}|^{\frac12} \big\rbrace, \\
	\Omega_3(\alpha) &= \big\lbrace (x,y) \in \R^4 : |{\ln\alpha}|^{\frac12} \leq \ln|x-y| < |{\ln\alpha}| \big\rbrace, \\
	\Omega_4(\alpha) &= \big\lbrace (x,y) \in \R^4 : |{\ln\alpha}| \leq \ln|x-y| \big\rbrace.
\end{aligned}
\end{equation}
Note that $ \R^4 = \bigcup_{k=1}^4 \Omega_k(\alpha) $ as a disjoint union and that
$ \Omega_k(\alpha) \neq \emptyset $ for each $ k \in \lbrace 1, \ldots, 4 \rbrace $ since by assumption
$ \alpha\in(0,\frac1e) $, and thus $\smash{1< |{\ln\alpha}|^{\frac12}< |{\ln\alpha}|}$.
Consequently, \eqref{claim_t} follows if we consider
for $ t \in \lbrace 1,2 \rbrace $ and $ k \in \lbrace 1, \ldots, 4 \rbrace $ the integrals
\begin{align} \label{I_k}
	I_k(\alpha) \coloneqq \int_{\Omega_k(\alpha)} |V(x)| |\calG(x,y;\alpha) - g(\alpha)|^t |V(y)| \dmath(x,y),
\end{align}
and show for each $ k \in \lbrace 1, \ldots, 4 \rbrace $ that 
\begin{align} \label{claim:I_k}
	I_k(\alpha) = o(|{\ln\alpha}|^{t-s}), \quad {\alpha\to 0+}.
\end{align}

It is clear that \eqref{claim:I_k} holds for $ I_1(\alpha) $ since by \eqref{ineq:W}
\begin{align*} 
	I_1(\alpha)
		\leq
	C \bigg( \| V \|_{L^1(\R^2)}^2 + \int_{|x-y|<e} |V(x)| |{\ln|x-y|}|^t |V(y)| \dmath(x,y) \bigg) < \infty;
\end{align*}
the last integral is finite by \eqref{ineq:ln_r} applied with $ r = t $ 
and our assumptions $ V \in L^1(\R^2) $ and \eqref{V_roll}.
In particular, $ I_1(\alpha) $ is bounded for $ \alpha\in(0,\frac1e) $ and since 
$ |{\ln\alpha}|^{t-s}\to \infty $ as $ \alpha\to 0+ $ the claim \eqref{claim:I_k} follows for $k=1$.

We continue with the integral 
$ I_2(\alpha) $. Note that for any $ (x,y) \in \Omega_2(\alpha) $
\begin{equation} \label{ineq:Om_2}
\begin{aligned} 
	1 \leq (\ln|x-y|)^t 
	&=
	({\ln|x-y|})^{t-s} ({\ln|x-y|})^s \\
	&<
	|{\ln\alpha}|^{\frac{t-s}{2}} ({\ln|x-y|})^s.
\end{aligned}
\end{equation}
By employing this inequality in \eqref{ineq:W} we infer
\begin{equation*}  
\begin{aligned} 
	|\calG(x,y;\alpha) - g(\alpha)|^t
		\leq
	C(1 + |{\ln\alpha}|^{\frac{t-s}{2}} ({\ln|x-y|})^s	)
		\leq
	C|{\ln\alpha}|^{\frac{t-s}{2}} ({\ln|x-y|})^s		
\end{aligned}
\end{equation*}
and hence by \eqref{roll_exists}
\begin{align*}
	I_2(\alpha) 
	\leq
	C|{\ln\alpha}|^{\frac{t-s}{2}} .
\end{align*}
In particular, since by assumption $ s < 1 $ and $ t \in \lbrace 1, 2 \rbrace $, we conclude
\begin{align*}
	\frac{I_2(\alpha)}{|{\ln\alpha}|^{t-s}} \leq
	C |{\ln\alpha}|^{\frac{s-t}{2}} 
	\to
	0,
	\quad \alpha\to 0+,
\end{align*}
i.e., \eqref{claim:I_k} holds for $k=2$.

Next we show \eqref{claim:I_k} for $ I_3(\alpha) $ and $ I_4(\alpha) $. A similar estimate
as in \eqref{ineq:Om_2} implies for any $ (x,y)\in\Omega_3(\alpha) $ 
\begin{align*} 
	(\ln|x-y|)^t 
	\leq
	|{\ln\alpha}|^{t-s} (\ln|x-y|)^s.
\end{align*}
Using the inequality in \eqref{ineq:W} implies that 
\begin{equation}\label{ineq:W_I3}
\begin{aligned} 
	|\calG(x,y;\alpha) - g(\alpha)|^t
		&\leq 
	C( 1 + |{\ln\alpha}|^{t-s} (\ln|x-y|)^s ) \\
		&\leq
	C|{\ln\alpha}|^{t-s} (\ln|x-y|)^s,
	\quad (x,y) \in \Omega_3(\alpha).
\end{aligned}
\end{equation}
For $ (x,y) \in \Omega_4(\alpha) $ we see (recall that $ \alpha\in(0,\frac1e) $)
\begin{align*}
	|x-y| \geq e^{|{\ln\alpha}|} = e^{-\ln\alpha} = \frac{1}{\alpha}
\end{align*}
so we can employ Lemma~\ref{Lem:ineq}(ii) and (i), respectively, to obtain
\begin{equation} \label{ineq:W_I4}
\begin{aligned} 
	|\calG(x,y;\alpha) - g(\alpha)|^t
	&=
	|\calG(x,y;\alpha) - g(\alpha)|^{t-s} |\calG(x,y;\alpha) - g(\alpha)|^s \\
	&\leq 
	C|{\ln\alpha}|^{t-s} (1 + (\ln|x-y|)^s) \\
	&\leq
	C|{\ln\alpha}|^{t-s} (\ln|x-y|)^s,
	\quad (x,y) \in \Omega_4(\alpha).
\end{aligned}
\end{equation}
Finally, by combining \eqref{ineq:W_I3} with \eqref{ineq:W_I4} we obtain
\begin{align*}
	I_3(\alpha) + I_4(\alpha)
	\leq
	C|{\ln\alpha}|^{t-s} \int_{\Omega_3(\alpha) \cup \Omega_4(\alpha)} |V(x)|(\ln|x-y|)^s|V(y)| \dmath(x,y).
\end{align*}
By \eqref{roll_exists} and the definition of $ \Omega_3(\alpha) $ and $ \Omega_4(\alpha) $
dominated convergence implies that the last integral tends to zero as $ \alpha \to 0+ $ so
\eqref{claim:I_k} for $ k \in \lbrace 3,4 \rbrace $ follows. 
\end{proof}

\subsection{Negative eigenvalues of \texorpdfstring{$ H_\varepsilon $}{the Schrödinger operator} }

In the following we use the decomposition \eqref{Q=L+M_schroed} to characterize the negative eigenvalues of $ H_\varepsilon $ 
in certain subsets as zeros of a function $\Lambda_\varepsilon$.

\begin{proposition} \label{Prop:Lambda_schroed}
Let $ \varepsilon > 0 $, assume that $ V \in L^1(\R^2) $ is real-valued and satisfies \eqref{V_roll}, let
$ H_\varepsilon = -\Delta - \varepsilon V $  be defined as in \eqref{H_eps_form},
and let $ g $ be given by \eqref{h}. Then for any $ \alpha > 0 $ such that $ \| \varepsilon M(\alpha) \| < 1 $ we have that
$ -\alpha^2 $ is an eigenvalue of $ H_\varepsilon $ if and only if 
\begin{align} \label{Lambda_b}
	\Lambda_\varepsilon(\alpha) \coloneqq 1 - \varepsilon g(\alpha) ( [I-\varepsilon M(\alpha)]^{-1} |V|^\frac12, V^\frac12)
	=
	0.
\end{align}
Moreover, any eigenvalue of $ H_\varepsilon $ of this form is simple.
\end{proposition}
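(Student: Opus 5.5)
The plan is to combine the Birman--Schwinger principle \eqref{BS_eps} with a rank-one reduction based on the splitting \eqref{Q=L+M_schroed}. Fix $\alpha>0$ with $\|\varepsilon M(\alpha)\|<1$. Then $I-\varepsilon M(\alpha)$ is boundedly invertible by a Neumann series, and we can factor
\begin{align*}
	I-\varepsilon Q(\alpha) = [I-\varepsilon M(\alpha)]\bigl(I - \varepsilon [I-\varepsilon M(\alpha)]^{-1} L(\alpha)\bigr).
\end{align*}
Since the first factor is injective and surjective, $\dim\ker(I-\varepsilon Q(\alpha)) = \dim\ker(I-\varepsilon [I-\varepsilon M(\alpha)]^{-1}L(\alpha))$. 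By \eqref{BS_eps}, this equals $\dim\ker(H_\varepsilon+\alpha^2)$. It therefore remains to analyse the kernel of $I-K$, where $K:=\varepsilon[I-\varepsilon M(\alpha)]^{-1}L(\alpha)$ has rank at most one.

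From \eqref{L_kernel}, $L(\alpha)f = g(\alpha)(f,\overline{V^{1/2}})|V|^{1/2} = g(\alpha)(f,V^{1/2})|V|^{1/2}$, since $V^{1/2}$ is real. Here $|V|^{1/2},V^{1/2}\in L^2(\R^2)$ because $V\in L^1(\R^2)$. Hence
\begin{align*}
	Kf = \varepsilon g(\alpha)(f,V^{1/2})\,\phi, \qquad \phi:=[I-\varepsilon M(\alpha)]^{-1}|V|^{1/2}.
\end{align*}
Suppose $f\in\ker(I-K)$ with $f\neq 0$. Then $f = c\phi$ with $c=\varepsilon g(\alpha)(f,V^{1/2})\neq0$, for otherwise $f=Kf=0$. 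Substituting back gives $c\phi = \varepsilon g(\alpha) c(\phi,V^{1/2})\phi$. Since $\phi\neq0$ (as $V\not\equiv0$ whenever $\ker\neq\{0\}$), this yields $1=\varepsilon g(\alpha)(\phi,V^{1/2})$, i.e.\ $\Lambda_\varepsilon(\alpha)=0$. Conversely, if $\Lambda_\varepsilon(\alpha)=0$, then $\phi\neq0$ and $K\phi = \varepsilon g(\alpha)(\phi,V^{1/2})\phi=\phi$, so $\phi\in\ker(I-K)$.

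In every case the kernel is contained in $\operatorname{span}\{\phi\}$, so its dimension is at most one. This gives both the equivalence \eqref{Lambda_b} and the simplicity of the eigenvalue.

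The only delicate points are bookkeeping. First, one has to make sure that the inner product convention matches \eqref{Lambda_b}; this works because $V^{1/2}$ is real-valued. Second, one should note that $M(\alpha)=Q(\alpha)-L(\alpha)$ is bounded, indeed Hilbert--Schmidt by Proposition~\ref{Prop:HS} together with the rank-one nature of $L$, so that the Neumann series argument is legitimate. No real obstacle is expected beyond these checks.
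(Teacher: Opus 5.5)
Your proof is correct and follows the paper's argument. You use the same factorization $I-\varepsilon Q(\alpha) = [I-\varepsilon M(\alpha)](I-[I-\varepsilon M(\alpha)]^{-1}\varepsilon L(\alpha))$ together with \eqref{BS_eps}, and then reduce the rank-one problem to the scalar $\Lambda_\varepsilon(\alpha)$. The paper does that last step by writing $L(\alpha)=g(\alpha)L_2L_1$ and invoking $\dim\ker(I-AB)=\dim\ker(I-BA)$, while you check by hand that the kernel lies in $\operatorname{span}\{\phi\}$; note that $\phi\neq 0$ there follows directly from $f=c\phi\neq 0$.
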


\begin{proof}
If $ \alpha > 0 $ is such that $ \| \varepsilon M(\alpha) \| < 1 $ we make use of
\eqref{Q=L+M_schroed} and 
the bounded invertibility of $ I - \varepsilon M(\alpha) $ and obtain
\begin{align} \label{fac:Q}
	I - \varepsilon Q(\alpha)
	=
	(I -\varepsilon M(\alpha)) \left( I - [I - \varepsilon M(\alpha)]^{-1} \varepsilon L(\alpha) \right).
\end{align}
With the help of the Birman-Schwinger principle \eqref{BS_eps} we then conclude 
\begin{align*}
	\dim\ker (H_\varepsilon + \alpha^2)
	&=
	\dim\ker (I - \varepsilon Q(\alpha) ) \\
	&=
	\dim\ker \left( I - [I - \varepsilon M(\alpha)]^{-1} \varepsilon L(\alpha) \right).
\end{align*}
Next, note that \eqref{L_kernel} implies $ L(\alpha) = g(\alpha) L_2 L_1 $ with 
\begin{align} \label{L1L2_def}
	L_1 : \begin{cases}
		L^2(\R^2) \to \C, \\
		f \mapsto (f, V^\frac12),
	\end{cases}
	\quad
	L_2 : \begin{cases}
		\C \to L^2(\R^2), \\
		\varphi\mapsto \varphi|V|^\frac12.
	\end{cases}
\end{align}
Recall that for two bounded and everywhere defined operators $ A : \calH \to \calG $ and $ B : \calG \to \calH $ one has
$ \sigma_p(AB) \cup \lbrace 0 \rbrace = \sigma_p(BA) \cup\lbrace 0 \rbrace $ and
\begin{align*}
	\dim\ker(I_\calG - AB) = \dim\ker(I_\calH - BA),
\end{align*}
see, e.g., \cite[Chap.~VII.4, Prob.~7]{ConwayFunc}.
In particular, using the factorization \eqref{L1L2_def} of $ L $ we conclude that
\begin{align*}
	\dim\ker \left( I - [I - \varepsilon M(\alpha)]^{-1} \varepsilon L(\alpha) \right)
	=
	\dim\ker \left( 1 - \varepsilon g(\alpha) L_1 [I - \varepsilon M(\alpha)]^{-1} L_2 \right).
\end{align*}
Since the definition of $ L_1 $ and $ L_2 $ implies
\begin{align*}
	1 - \varepsilon g(\alpha) L_1 [I - \varepsilon M(\alpha)]^{-1} L_2
	=
	1 - \varepsilon g(\alpha) ( [I - \varepsilon M(\alpha)]^{-1} |V|^\frac12, V^{\frac12} )
	=
	\Lambda_\varepsilon(\alpha)
\end{align*}
we have shown \eqref{Lambda_b}.

Finally, $ \dim \C = 1 $ so either $ \dim\ker\Lambda_\varepsilon(\alpha) = 1 $ if $ \Lambda_\varepsilon(\alpha) = 0 $ 
or alternatively $ \dim\ker\Lambda_\varepsilon(\alpha) = 0 $ if $ \Lambda_\varepsilon(\alpha) \neq 0 $. Hence any eigenvalue corresponding
to a zero of $ \Lambda_\varepsilon $ is simple.
\end{proof}

\subsection{Proof of Theorem~\ref{Thm}}
From the previous considerations it is clear that 
the operator $ {H_\varepsilon = -\Delta - \varepsilon V} $ in \eqref{H_eps_form}
is self-adjoint in $ L^2(\R^2) $, bounded from below, and one has $ \sigma_{\rm{ess}}(H_\varepsilon) = [0,\infty) $.
Thus it remains to verify (ii).
By Proposition~\ref{Prop:Lambda_schroed} it suffices to prove that
the function $ \Lambda_\varepsilon $ given in \eqref{Lambda_b} has a zero as $ \varepsilon\to 0+ $.
For ease of notation we set 
\begin{align*}
	U \coloneqq \int_{\R^2} V(x) \dmath x=(|V|^\frac12, V^\frac12)
\end{align*}
and note that by assumption $ U > 0 $. Let $ \alpha > 0 $ such that 
$ \| \varepsilon M(\alpha) \| < 1 $. 
By employing the expansion
\begin{align*}
	[I-\varepsilon M(\alpha)]^{-1} 
	= 
	I + \varepsilon M(\alpha) + \varepsilon^2 [I-\varepsilon M(\alpha)]^{-1} M(\alpha)^2
\end{align*}
in \eqref{Lambda_b} we find
\begin{equation} \label{Lambda_exp}
\begin{aligned} 
	\Lambda_\varepsilon(\alpha)
	=
	1 - g(\alpha) \big( U\varepsilon + r_\varepsilon(\alpha)),
\end{aligned}
\end{equation}
where we have defined
\begin{align} \label{error}
	r_\varepsilon(\alpha) \coloneqq
	\varepsilon^2 (M(\alpha)|V|^\frac12, V^\frac12) 
	+ \varepsilon^3 ( [I-\varepsilon M(\alpha)]^{-1} M(\alpha)^2 |V|^\frac12, V^\frac12 ).
\end{align}

Next, we make the substitution
\begin{align} \label{g_subs}
	\alpha(t) \coloneqq
	g^{-1} \left( 
	\frac{1}{U\varepsilon}(1+t) \right)
	=
	\exp\left( -\frac{2\pi}{U\varepsilon}(1+t) \right),
	\quad t \in \left[-\frac12, \frac12\right],
\end{align}
which is well-defined for any $ \varepsilon > 0 $ since
$ g : (0,1) \to (0,\infty) $ is bijective and by assumption $ U > 0 $.
Clearly, $ \sup_{t\in[-\frac12,\frac12]} |\alpha(t)| \to 0 $ as $ \varepsilon\to 0+ $ and
\begin{align}\label{g_alpha}
	\sup_{t\in[-\frac12,\frac12]}|g(\alpha(t))| \leq C\varepsilon^{-1}
\end{align}
for some $ C > 0 $ and all $ \varepsilon > 0 $. Hence, all the following asymptotics as $ \varepsilon\to 0+ $ hold
uniformly for all $ t\in[-\frac12,\frac12] $. By Lemma~\ref{Lem:M}(i) we have 
\begin{align} \label{est:M}
	\| \varepsilon M(\alpha(t)) \|^2
	=
	\varepsilon^2 \| M(\alpha(t)) \|^2
	=
	o( \varepsilon^{s} ),
	\quad {\varepsilon\to 0+}, 
\end{align}
i.e., $ \| \varepsilon M(\alpha(t)) \| \leq \frac12 $ for any $ t \in [-\frac12, \frac12] $ and all $ \varepsilon \in (0,\varepsilon_0) $
if $ \varepsilon_0 > 0 $ is chosen small enough. In particular,
$ {\Lambda_\varepsilon \circ \alpha : [-\frac12, \frac12] \to \R} $ is well-defined and
continuous for any $ \varepsilon \in (0,\varepsilon_0) $, which 
is seen by expanding $ [I-\varepsilon M(\alpha)]^{-1} $ in \eqref{Lambda_exp} into a Neumann series 
and using that the family $ M $ is real-analytic on $ (0,\infty) $ and hence
also on $ \alpha([-\frac12,\frac12]) \subset (0,\infty) $.
Note also that each zero $ t $ of $ {\Lambda_\varepsilon \circ \alpha} $
corresponds to a simple eigenvalue $ -\alpha(t)^2 $ of $ H_\varepsilon $, see Proposition~\ref{Prop:Lambda_schroed}.

We continue by proving the existence of a zero of $ \Lambda_\varepsilon \circ \alpha $.
First we derive an upper bound for $ r_\varepsilon \circ \alpha $. By \eqref{est:M} we have
\begin{align*}
	\varepsilon^3 \big| ( [I-\varepsilon M(\alpha(t))]^{-1} M(\alpha(t))^2 |V|^\frac12, V^\frac12 ) \big|
	&\leq
	\varepsilon^3 \frac{\| M(\alpha(t)) \|^2 \| V \|_{L^1(\R^2)}}{1 - \| \varepsilon M(\alpha(t)) \|}  \\
	&=
	o(\varepsilon^{1+s}),
	\quad {\varepsilon\to 0+},
\end{align*}
and with Lemma~\ref{Lem:M}(ii) and \eqref{g_alpha} we find 
\begin{align*}
	\varepsilon^2 \big| (M(\alpha(t))|V|^\frac12, V^\frac12) \big|
	=
	o(\varepsilon^{1+s}),
	\quad {\varepsilon\to 0+}.
\end{align*}
By combining the above estimates we see that the remainder $ r_\varepsilon $ in \eqref{error} satisfies
\begin{align*}
	|r_\varepsilon(\alpha(t))| = o(\varepsilon^{1+s}),
	\quad{\varepsilon\to 0+}.
\end{align*}
Hence, we conclude with \eqref{Lambda_exp} and \eqref{g_subs} that 
\begin{align*}
	(\Lambda_\varepsilon \circ \alpha)(t)
	=
	1 - g(\alpha(t)) \left( U\varepsilon + o(\varepsilon^{1+s}) \right)
	=
	-t + o(\varepsilon^s),
	\quad{\varepsilon\to 0+},
\end{align*}
which implies that $ \Lambda_\varepsilon \circ \alpha $ has a zero 
$ t_\varepsilon = o(\varepsilon^s) $ as $ \varepsilon\to 0+ $.

By Proposition~\ref{Prop:Lambda_schroed} it follows that 
$ \lambda_\varepsilon = - \alpha(t_\varepsilon)^2 $ is an eigenvalue of $ H_\varepsilon $ so 
by employing \eqref{g_subs} we find
\begin{align*}
	\ln(-\lambda_\varepsilon)
	=
	2 \ln( \alpha(t_\varepsilon))
	=
	- \frac{4\pi}{U\varepsilon} \left( 1 + t_\varepsilon \right)
	=
	- \frac{4\pi}{U\varepsilon} \left( 1 + o(\varepsilon^s) \right),
	\quad\varepsilon\to 0+,
\end{align*}
and all claims are shown. \qed

\section{Details on Example~\ref{Ex}(ii)} \label{sec:ex}

Here we check that the potential $ V_0 $ given by \eqref{Ex_0} satisfies \eqref{V_roll}, i.e.,
\begin{align} \label{claim:roll_2_ex}
	\int_{|x-y|<e} V_0(x) (\ln|x-y|)^2 V_0(y) \dmath(x,y) < \infty.
\end{align}
Recall that $ V_0 \in L^1(\R^2) $ and that we have 
\begin{align} \label{integrand_0}
	V_0(x)(\ln|x-y|)^2 V_0(y)
	=
	\frac{\mathbbm{1}_{\lbrace |x|<\frac13 \rbrace}(x)}{|x|^2 (\ln|x|)^4} (\ln|x-y|)^2
	\frac{\mathbbm{1}_{\lbrace |y|<\frac13 \rbrace}(y)}{|y|^2 (\ln|y|)^4}.
\end{align}
Since $ (\ln|x-y|)^2 $ is bounded for $ 1 \leq |x-y| < e $ it is clear that the integral \eqref{claim:roll_2_ex} is 
finite over the region $ 1 \leq |x-y| < e $.
Hence, it suffices to consider
the regions
\begin{align*}
	\Omega_1 &\coloneqq \left\lbrace (x,y)\in\R^4 : |x-y| < 1, \; |x-y| \geq \frac{|x|}{2} \right\rbrace, \\
	\Omega_2 &\coloneqq \left\lbrace (x,y)\in\R^4 : |x-y| < 1, \; |x-y| < \frac{|x|}{2} \right\rbrace.
\end{align*}

For $ (x,y) \in \Omega_1 $ we have 
\begin{align*}
	\mathbbm{1}_{\lbrace |x|<\frac13 \rbrace}(x)(\ln|x-y|)^2
	\leq 
	\mathbbm{1}_{\lbrace |x|<\frac13 \rbrace}(x) \left(\ln\left|\frac{x}{2}\right|\right)^2
	\leq 
	C \mathbbm{1}_{\lbrace |x|<\frac13 \rbrace}(x)(\ln|x|)^2
\end{align*}
and hence by \eqref{integrand_0}
\begin{align*}
	\int_{\Omega_1}
	V_0(x)&(\ln|x-y|)^2 V_0(y) \dmath(x,y) \\
	&\leq
	C \int_{\Omega_1}
	\frac{\mathbbm{1}_{\lbrace |x|<\frac13 \rbrace}(x)}{|x|^2 (\ln|x|)^2} 
	\frac{\mathbbm{1}_{\lbrace |y|<\frac13\rbrace}(y)}{|y|^2 (\ln|y|)^4} \dmath(x,y) \\
	&\leq
	C\bigg( \int_{|x|<\frac13} \frac{\dmath x}{|x|^2 (\ln|x|)^2} \bigg)
	\bigg( \int_{|y|<\frac13} \frac{\dmath y}{|y|^2 (\ln|y|)^4} \bigg) \\
	&< \infty.
\end{align*}

For $ (x,y) \in \Omega_2 $ the triangle inequality implies $ |x|/2 < |y| < 3|x|/2 $, and hence 
\begin{align*}
	\frac{\mathbbm{1}_{\lbrace |x|<\frac13\rbrace}(x)\mathbbm{1}_{\lbrace |y|<\frac13\rbrace}(y)}{|y|^2 (\ln|y|)^4}
	\leq
	C\frac{\mathbbm{1}_{\lbrace |x|<\frac13\rbrace}(x)\mathbbm{1}_{\lbrace |y|<\frac13\rbrace}(y)}{|x|^2 (\ln|x|)^4}.
\end{align*}
In particular, we find with \eqref{integrand_0}
\begin{align*}
	V_0(x)(\ln|x-y|)^2 V_0(y)
	\leq
	C\frac{\mathbbm{1}_{\lbrace |x|<\frac13 \rbrace}(x) \mathbbm{1}_{\lbrace |y|<\frac13\rbrace}(y)}{|x|^4 (\ln|x|)^8} (\ln|x-y|)^2.
\end{align*}
Using the substitution $ u(y) = x-y $ and Fubini's theorem we conclude
\begin{align*}
	\int_{\Omega_2} V_0(x)(\ln|x-y|)^2 V_0(y) \dmath(x,y)
	\leq
	C\int_{|x|<\frac13} \int_{|u|<\frac{|x|}{2}} \frac{(\ln|u|)^2 \dmath u \dmath x}{|x|^4 (\ln|x|)^8}.
\end{align*}
For the inner integral a simple estimate after integrating by parts twice shows
\begin{align*}
	\int_{|u|<\frac{|x|}{2}} (\ln|u|)^2 \dmath u
	=
	2\pi \int_0^{\frac{|x|}{2}} r (\ln r)^2 \dmath r 
	\leq
	C |x|^2 (\ln|x|)^2
\end{align*}
so we finally obtain
\begin{align*}
	\int_{|x|<\frac13} \int_{|u|<\frac{|x|}{2}} \frac{(\ln|u|)^2 \dmath u \dmath x}{|x|^4 (\ln|x|)^8}
	\leq
	C \int_{|x|<\frac13} \frac{\dmath x}{|x|^2 (\ln|x|)^6} < \infty.
\end{align*}
\\

\textbf{Acknowledgments.}
The authors thank the referees for a careful reading of the manuscript and various helpful suggestions to improve the text. 
This research was funded in part by the Austrian Science Fund (FWF) 10.55776/P 33568-N.

\end{document}